\theoremstyle{plain}
\newtheorem{thrm}{Theorem}[section]
\newtheorem{lemma}[thrm]{Lemma}
\newtheorem{rmrk}[thrm]{Remark}
\begin{document}
	% begin top matter
	% ***************** macroes needed for this paper ************************
	\newcommand{\psr}{P^-}
	\newcommand{\qr}{Q_\rho}
	\newcommand{\sn}{\mathbb{S}^{n-1}}
	\newcommand{\SL}{\mathcal L^{1,p}( D)}
	\newcommand{\Lp}{L^p( Dega)}
	\newcommand{\CO}{C^\infty_0( \Omega)}
	\newcommand{\Rn}{\mathbb R^n}
	\newcommand{\Rm}{\mathbb R^m}
	\newcommand{\R}{\mathbb R}
	\newcommand{\Om}{\Omega}
	\newcommand{\Hn}{\mathbb H^n}
	\newcommand{\aB}{\alpha B}
	\newcommand{\eps}{\ve}
	\newcommand{\BVX}{BV_X(\Omega)}
	\newcommand{\p}{\partial}
	\newcommand{\IO}{\int_\Omega}
	\newcommand{\bG}{\boldsymbol{G}}
	\newcommand{\bg}{\mathfrak g}
	\newcommand{\bz}{\mathfrak z}
	\newcommand{\bv}{\mathfrak v}
	\newcommand{\Bux}{\mbox{Box}}
	\newcommand{\e}{\ve}
	\newcommand{\X}{\mathcal X}
	\newcommand{\Y}{\mathcal Y}
	\newcommand{\W}{\mathcal W}
	\newcommand{\la}{\lambda}
	\newcommand{\vf}{\varphi}
	\newcommand{\rhh}{|\nabla_H \rho|}
	\newcommand{\Ba}{\mathcal{B}_\beta}
	\newcommand{\Za}{Z_\beta}
	\newcommand{\ra}{\rho_\beta}
	\newcommand{\na}{\nabla_\beta}
	\newcommand{\vt}{\vartheta}

	\numberwithin{equation}{section}

	\newcommand{\RN} {\mathbb{R}^N}
	\newcommand{\Sob}{S^{1,p}(\Omega)}
	\newcommand{\Dxk}{\frac{\partial}{\partial x_k}}
	\newcommand{\Co}{C^\infty_0(\Omega)}
	\newcommand{\Je}{J_\ve}
	\newcommand{\beq}{\begin{equation}}
		\newcommand{\bea}[1]{\begin{array}{#1} }
			\newcommand{\eeq}{ \end{equation}}
		\newcommand{\ea}{ \end{array}}
	\newcommand{\eh}{\ve h}
	\newcommand{\Dxi}{\frac{\partial}{\partial x_{i}}}
	\newcommand{\Dyi}{\frac{\partial}{\partial y_{i}}}
	\newcommand{\Dt}{\frac{\partial}{\partial t}}
	\newcommand{\aBa}{(\alpha+1)B}
	\newcommand{\GF}{\psi^{1+\frac{1}{2\alpha}}}
	\newcommand{\GS}{\psi^{\frac12}}
	\newcommand{\HFF}{\frac{\psi}{\rho}}
	\newcommand{\HSS}{\frac{\psi}{\rho}}
	\newcommand{\HFS}{\rho\psi^{\frac12-\frac{1}{2\alpha}}}
	\newcommand{\HSF}{\frac{\psi^{\frac32+\frac{1}{2\alpha}}}{\rho}}
	\newcommand{\AF}{\rho}
	\newcommand{\AR}{\rho{\psi}^{\frac{1}{2}+\frac{1}{2\alpha}}}
	\newcommand{\PF}{\alpha\frac{\psi}{|x|}}
	\newcommand{\PS}{\alpha\frac{\psi}{\rho}}
	\newcommand{\ds}{\displaystyle}
	\newcommand{\Zt}{{\mathcal Z}^{t}}
	\newcommand{\XPSI}{2\alpha\psi \begin{pmatrix} \frac{x}{\left< x \right>^2}\\ 0 \end{pmatrix} - 2\alpha\frac{{\psi}^2}{\rho^2}\begin{pmatrix} x \\ (\alpha +1)|x|^{-\alpha}y \end{pmatrix}}
	\newcommand{\ZZ}{ \begin{pmatrix} xx^{t} & (\alpha + 1)|x|^{-\alpha}x y^{t}\\
			(\alpha + 1)|x|^{-\alpha}x^{t} y &   (\alpha + 1)^2  |x|^{-2\alpha}yy^{t}\end{pmatrix}}
	\newcommand{\norm}[1]{\lVert#1 \rVert}
	\newcommand{\ve}{\varepsilon}
	\newcommand{\D}{\operatorname{div}}
	\newcommand{\avint}{\mathop{\ooalign{$\int$\cr$-$}}}

	\title[Gradient Continuity etc.]{Borderline gradient continuity for fractional heat type operators}

	\author{Vedansh Arya*}
	\address{Tata Institute of Fundamental Research\\
		Centre For Applicable Mathematics \\ Bangalore-560065, India}\email[Vedansh Arya]{vedansh@tifrbng.res.in}
	
	\author{Dharmendra Kumar**}
	\address{Tata Institute of Fundamental Research\\
	Centre For Applicable Mathematics \\ Bangalore-560065, India}\email[Dharmendra Kumar]{dharmendra2020@tifrbng.res.in}

	\thanks{*vedansh@tifrbng.res.in,**dharmendra2020@tifrbng.res.in}
	\thanks{Tata Institute of Fundamental Research, Centre For Applicable Mathematics, Bangalore-560065, India}

	%\thanks{Second author is supported in part by SERB Matrix grant MTR/2018/000267 and by Department of Atomic Energy,  Government of India, under
	%project no.  12-R \& D-TFR-5.01-0520.}

	%\dedicatory{Dedicated to Giovanni Alessandrini, on his $60$-th birthday, with great affection and admiration}

	%
	% 
	% AMS information
	%
	\keywords{Extension problem, Fractional heat operator, Master equations, Borderline gradient continuity}
	\subjclass{35R11, 35B65, 35K65, 35R09}
	
	\maketitle
	% end top matter
	
	\begin{abstract}
	
	In this paper, we establish gradient continuity for solutions to \[ (\partial_t - \operatorname{div}(A(x) \nabla u))^s =f,\   s \in (1/2, 1), \] when $f$ belongs to the scaling critical function space $L(\frac{n+2}{2s-1}, 1)$. Our main results Theorems \ref{mthr} and  \ref{main1} can be seen  as a nonlocal  generalization of a well known result of Stein in the context of fractional heat type operators and sharpens some of the previous gradient continuity results which deals with $f$ in subcritical spaces. Our proof is based on an appropriate adaptation of  compactness arguments, which has its roots in a fundamental work of Caffarelli in \cite{Ca}.  
	
	\end{abstract}

	\tableofcontents
	
	\section{Introduction and the statement of the main result}
	In this article we prove  gradient continuity for the following nonlocal   operators $(\partial_t - \operatorname{div}( A(x) \nabla))^s$ which are modelled on the fractional heat operator $(\partial_t - \Delta)^s$  with critical scalar perturbations.  To provide some context to our work,  we note that the  study of the fractional heat operator $(\p_t - \Delta)^s$ was first proposed in M. Riesz' visionary papers \cite{R1} and \cite{R2}. This  nonlocal operator represents a basic model of the \emph{continuous time random walk} (CTRW) introduced by Montroll and Weiss in \cite{MW}. We recall that a CTRW is a generalization of a random walk where the wandering particle waits for a random time between jumps. It is a stochastic jump process with arbitrary distributions of jump lengths and waiting times. In \cite{MK}, Klafter and Metzler describe such processes by means of the nonlocal equation both in space and time
\[
\eta(t,x) = \int_0^\infty \int_{\mathbb{R}} \Psi(\tau,z) \eta(t-\tau,x-z) dz d\tau,
\]
which is an example of a \emph{master equation}. Such equations were introduced in 1973 by  Kenkre, Montroll and Shlesinger in \cite{KMS}, and they are presently receiving increasing attention by mathematicians also thanks to the work \cite{CSstein} of Caffarelli and Silvestre in which the authors establish the H\"older continuity of viscosity solutions of generalized master equations
\begin{equation}\label{masterL}
Lu(t,x) = \int_0^\infty\int_{\R^n} K(t,x;\tau,z) [u(t,x) - u(t-\tau,x-z)] dz d\tau = 0.
\end{equation}
On the kernel $K$ they assume that there exist $0<s<1$ and $\beta>2s$ such that for $0<c_1<c_2$  one can find $0<\la \le \Lambda$ for which for a.e. $(t,x)\in \mathbb{R}\times \mathbb{R}^n$ one has
\begin{equation}\label{master}
\begin{cases}
K(t,x;\tau,z) \ge \frac{\la}{|z|^{n+2s+\beta}}\ \ \ \ \ \text{when}\ c_1 |z|^\beta \le \tau \le c_2 |z|^\beta,
\\
K(t,x;\tau,z) \le \frac{\Lambda}{|z|^{n+2s+\beta} + \tau^{n/\beta + 1 + 2s/\beta}}.
\end{cases}
\end{equation}
The pseudo-differential operator $(\p_t - \Delta)^s$  which is defined  via the Bochner's  subordination principle  in the following way ( see for instance \cite{Ba}, \cite{ST}, \cite{BG} ),
\begin{align}\label{hs}
(\p_t - \Delta)^s u(t,x) & =  \frac{s}{\Gamma(1-s)}  \int_0^\infty  \int_{\Rn} \tau^{-s-1} G(\tau,z) [u(t,x) - u(t-\tau,x-z)] dz d\tau, 
\\
& = \frac{s}{\Gamma(1-s)}  \int_{-\infty}^t  \int_{\Rn} (t-\tau)^{-s-1} G(t-\tau,x-z) [u(t,x) - u(\tau,z)] dz d\tau,
\notag
\end{align}
where $G(\tau,z) = (4\pi \tau)^{-\frac n2} e^{-\frac{|z|^2}{4\tau}}$ is the standard heat kernel and $\Gamma(z)$ indicates Euler gamma function, is seen to be a special case of the master equation. We mention that  recently  Nystr\"om and Sande \cite{NS} and Stinga and Torrea \cite{ST} have independently adapted  to the fractional heat operator the celebrated extension procedure of Caffarelli and Silvestre in \cite{CS} which can be described as follows.

Given $s\in (0,1)$  we introduce the parameter 
\[
a = 1-2s\in (-1,1),
\]
 and indicate with $U = U(t,X)$ the solution to the following \emph{extension problem}  
 \begin{equation}\label{epb}
\begin{cases}
y^a \frac{\p U}{\p t} = \operatorname{div}_{X}(y^a \nabla_{X} U),\ \ \ \ \ \ \ (t,X)\in \mathbb{R} \times \mathbb{R}^n\times (0, \infty) ,
\\
U(t,x,0) = u(t,x),\ \ \ \ \ \ \ \ \ \ \ \ \ (t,x)\in \mathbb{R} \times \mathbb{R}^n.
\end{cases}
\end{equation}

Using an appropriate Poisson representation  of the extension problem it was proved in \cite{NS} and \cite{ST}, see also Section 3 in \cite{BG} for details, that one has
in $L^2(\mathbb{R} \times \mathbb{R}^n)$
\begin{equation}\label{dtn2}
- \frac{2^{2s-1} \Gamma(s)}{\Gamma(1-s)} \underset{y\to 0^+}{\lim} y^a \frac{\partial U}{\partial y}(t,x,y) = (\partial_t - \Delta)^s u(t,x).
\end{equation}

Such an extension problem has been generalized for fractional powers of  variable coefficient operators such  as $(\partial_t - \operatorname{div} ((A(x) \nabla))^s$ in  \cite{BDS} and  \cite{BS}. We also refer to \cite{BGMN}  for a   generalization   of the extension problem in the subelliptic situation.  We would like to mention  that the study of the fractional heat type operators as well as the related  extension problem has received a lot of attention in recent times, see  for instance \cite{Au}, \cite{ACM}, \cite{AT}, \cite{BG}, \cite{BDGP1}, \cite{BDGP2}, \cite{BS}, \cite{DP}, \cite{LN}, \cite{LLR}. We would also like to mention that extention problem is prototype of the equations with general $A_2$ weight studied by 
Chiarenza and Serapioni in \cite{CSe}.

 Now we will state our main result: Consider the following problem
		\begin{equation}\label{mainpr1}
			\begin{cases}
				y^a \partial_t U - \text{div}(y^a B(x) \nabla U)=0&\hbox{in}~Q^*_1 \\
				-y^a U_y\big|_{y=0}=f&\hbox{on}~Q_1,
			\end{cases}
		\end{equation}
	where $a=1-2s;$ for $1/2<s<1$, $$ B= \begin{bmatrix}
		A & 0\\
		0 & 1
	\end{bmatrix} $$
	and $A$ is a uniformly elliptic matrix with Dini modulus of continuity $\omega_A$ and $f$ belongs to the scaling critical function  space   $L\big(\frac{n+2}{2s-1},1\big).$
	\begin{thrm}\label{mthr}
		Let $U$ be a solution of (\ref{mainpr1}) in $Q_1^*$. Then there exist modulus of continuity $K$ depending on ellipticity, $\omega_A,$ $n,$ $s$ and $f$ such that for all $(t_1,X_1),(t_2,X_2) \in Q_{1/2}^*$
	\begin{align}
		|\nabla U(t_1,X_1)-\nabla U(t_2,X_2)| \le C K(|(t_1,X_1)-(t_2,X_2)|)
	\end{align}
and \begin{align*}
	| U(t_1,X)- U(t_2,X)| \le C K(\sqrt{|t_1-t_2}|)\sqrt{|t_1-t_2|},
\end{align*}
where $C^2=\int_{Q_1}U(t,x,0)^2 dtdx + \int_{Q_1^*}U(t,X)^2 dtdX.$
	\end{thrm}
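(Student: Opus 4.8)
The plan is to follow the Caffarelli-type compactness/perturbation scheme, now adapted to the degenerate parabolic extension operator $y^a\partial_t - \operatorname{div}(y^a B(x)\nabla)$. The first step is to reduce the critical Lorentz datum $f \in L(\frac{n+2}{2s-1},1)$ to a quantitatively controlled oscillation estimate at every dyadic scale: writing $f = \sum_k f_k$ with $f_k$ supported where $|f|$ is comparable to a dyadic level, the Lorentz norm $\|f\|_{L(\frac{n+2}{2s-1},1)}$ is precisely the summable quantity that controls $\sum_k r_k^{2s-1}\|f_k\|_\infty$-type expressions after rescaling, and this is exactly what makes the borderline (Stein-type) conclusion work rather than merely $C^{0,\alpha}$ gradient regularity. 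So I would first record the scaling of the problem: under the parabolic dilation $(t,X)\mapsto (r^2 t, rX)$ the equation is invariant and $f$ rescales with weight $r^{2s-1}$, confirming $L(\frac{n+2}{2s-1},1)$ is the critical space.

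Next, the heart of the argument is a \emph{homogeneous approximation lemma}: a solution $U$ of \eqref{mainpr1} in $Q_1^*$ with $\|U\|_{L^2}\le 1$ and $\|f\|$ small is, in $Q_{1/2}^*$, $L^2$-close to a solution of the constant-coefficient homogeneous extension equation $y^a\partial_t V - \operatorname{div}(y^a B(0)\nabla V)=0$ with vanishing Neumann data. This is proved by contradiction and compactness — assume a sequence $U_j$ with $f_j\to 0$ in norm and $\omega_A(r)\to 0$ along the relevant scales but $U_j$ staying far from all such $V$; energy estimates (Caccioppoli in the $A_2$-weighted setting, available from the theory of Chiarenza–Serapioni \cite{CSe} for general $A_2$ weights, which the excerpt explicitly invokes) give compactness in $L^2_{\mathrm{loc}}$, and the limit solves the homogeneous problem, a contradiction. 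For the homogeneous problem one has interior $C^\infty$ regularity in $(t,x)$ and the crucial fact that $\nabla_x V$ and the conormal derivative are as regular as one wants; in particular $V$ is, up to its value and first-order Taylor polynomial in $X$ at the center, controlled at scale $\theta$ by $\theta^2 \sup|U_j|$ in $L^2$.

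The third step iterates this: at each dyadic scale $\theta^k$ one subtracts the appropriate affine (in $X$) function $\ell_k(X) = U(\text{center}) + \nabla U(\text{center})\cdot X$ built from the approximating $V$, rescales, and shows the rescaled remainder again satisfies the hypotheses of the approximation lemma with a datum whose norm is controlled by $\omega_A(\theta^k)$ plus the $k$-th piece of the Lorentz decomposition of $f$. The Dini condition on $\omega_A$ together with the finiteness of $\|f\|_{L(\frac{n+2}{2s-1},1)}$ makes $\sum_k (\omega_A(\theta^k) + \|f_k\|\theta^{k(2s-1)})$ summable, which yields that $\nabla U(\text{center})$ — more precisely the sequence of affine approximants — is Cauchy, with modulus of continuity $K(r) = \sum_{\theta^k \le r}(\cdots) + (\text{tail})$. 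Translating the center over $Q_{1/2}^*$ and combining the estimates at two nearby points $(t_1,X_1),(t_2,X_2)$ in the standard way (compare each to the affine approximant at the scale $\sim |(t_1,X_1)-(t_2,X_2)|$) gives the stated continuity of $\nabla U$; the estimate for $|U(t_1,X)-U(t_2,X)|$ in terms of $K(\sqrt{|t_1-t_2|})\sqrt{|t_1-t_2|}$ follows from the same iteration by reading off the time-oscillation of $U$ at each scale (parabolic scaling pairs $\sqrt{|t_1-t_2|}$ with the spatial scale), using that the first-order part in $X$ has been subtracted.

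The main obstacle I anticipate is handling the \emph{degeneracy/singularity of the weight} $y^a$ with $a=1-2s\in(-1,0)$ uniformly through the iteration: the Caccioppoli and compactness estimates must be the weighted ones, the affine corrections should only be taken in the $x$-variables (not $y$), and one must verify that the homogeneous limit problem retains enough regularity \emph{up to the boundary} $\{y=0\}$ — this is where the Neumann/conormal condition $-y^a U_y|_{y=0}=f$ interacts with the weight, and where one needs the boundary regularity theory for such degenerate operators (the results generalizing \cite{CS}, \cite{NS}, \cite{ST}, \cite{BDS}, \cite{BS}) to produce a Lipschitz-in-$X$, Hölder-in-$t$ bound on $\nabla V$ near $\{y=0\}$ with constants depending only on ellipticity, $n$, $s$. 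A secondary technical point is the bookkeeping that matches the dyadic decomposition of the Lorentz function exactly to the geometric series arising from rescaling, so that the borderline (rather than strictly subcritical) hypothesis suffices — this is the place where the sharpness over earlier results is gained and where the argument is most delicate.
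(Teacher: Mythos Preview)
Your proposal follows the same Caffarelli-style iteration scheme as the paper, but with two methodological differences and one structural omission worth noting.

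\medskip

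\textbf{Approximation lemma.} You propose proving it by contradiction and compactness (a sequence $U_j$ with $f_j\to 0$, extracting a limit). The paper instead proceeds \emph{directly}: it solves the mixed Dirichlet--Neumann problem for the homogeneous constant-coefficient equation with $U$ as lateral/initial boundary data (existence is supplied in the Appendix), then estimates $\|U-V\|$ by testing the difference of the two weak formulations against $U_h-V_h$ (Steklov averages) and applying energy, trace, and weighted Poincar\'e inequalities. The direct route avoids the need for a weighted Rellich compactness theorem and yields an explicit $\delta(\epsilon)$; your compactness route is viable in principle but would require justifying compact embedding in $L^2_a$.

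\medskip

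\textbf{Handling of the Lorentz datum.} You propose a level-set decomposition $f=\sum_k f_k$. The paper never decomposes $f$; instead it controls the dyadic quantities $r_i^{2s-1}\big(\fint_{Q_{r_i}}|f|^2\big)^{1/2}$ by a modified Riesz-type potential $\tilde{\mathbf I}_2^f$ (``Estimate 1''), bounds that potential by $\int \rho^{(2s-1)/(n+2)}(g^{**}(\rho))^{1/2}\,d\rho/\rho$ via the Hardy--Littlewood rearrangement inequality (``Estimate 2''), and then invokes O'Neil's inequality to conclude finiteness from $f\in L\big(\tfrac{n+2}{2s-1},1\big)$. Both routes encode the same borderline summability; the potential/rearrangement route is the one aligned with the cited Daskalopoulos--Kuusi--Mingione argument.

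\medskip

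\textbf{Boundary/interior splitting.} This is the piece missing from your outline. The iteration producing the affine approximants $\ell_k(x)$ is run \emph{only} at thin-space centers $(t_0,x_0,0)$; it yields the estimate of Lemma~\ref{l2} for half-cubes $Q_r^*(t_0,x_0,0)$ sitting on $\{y=0\}$. To compare $\nabla U$ at two interior points $(t_1,X_1),(t_2,X_2)\in Q_{1/2}^*$, the paper splits into two cases according to whether $|(t_1,X_1)-(t_2,X_2)|\le y_1/4$ or not (with $y_1=\min(y_1,y_2)$). In the near-boundary case~(ii) the boundary iteration plus a Campanato comparison suffices. In the interior case~(i) the paper rescales so that the equation becomes uniformly parabolic (no degeneracy in the rescaled cube) and invokes a separate interior $C^1$-Dini estimate for $u_t-\operatorname{div}(B\nabla u)-b\cdot\nabla u=\operatorname{div} g$ (Theorem~\ref{irprop}, based on \cite{ck}). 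Your proposal reads as if a single uniform iteration centered at arbitrary points of $Q_{1/2}^*$ would work; because the weight $y^a$ degenerates, the paper's resolution of the obstacle you correctly flag is precisely this explicit boundary/interior dichotomy rather than one unified scheme.
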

	
	In view of the extension problem for $(\partial_t - \operatorname{div}(B(x) \nabla ))^s$  in \cite{BS}, we obtain consequently that the following regularity result holds for the nonlocal fractional heat type problem. 
	
	\begin{thrm}\label{main1}
	Let $s \in (1/2, 1)$ and let  $u$ solve $(\partial_t - \operatorname{div}(A(x) \nabla u))^s =f$ where $A(x)$ is uniformly elliptic with Dini coefficients and $f \in L\big(\frac{n+2}{2s-1},1\big).$ Then $\nabla_x u$ is continuous.	\end{thrm}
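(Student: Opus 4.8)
The plan is to obtain Theorem \ref{main1} as a direct consequence of Theorem \ref{mthr}, through the extension characterization of $(\partial_t - \operatorname{div}(A(x)\nabla))^s$ established in \cite{BS}.

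First I would realize $u$ as a boundary trace. Given $u$ with $(\partial_t - \operatorname{div}(A(x)\nabla u))^s = f$, let $U = U(t,x,y)$ be its Caffarelli--Silvestre type extension, i.e. the solution of $y^a \partial_t U = \operatorname{div}_{X}(y^a B(x)\nabla_X U)$ on $\mathbb{R}\times\mathbb{R}^n\times(0,\infty)$ with $U(t,x,0) = u(t,x)$, where $X=(x,y)$, $a = 1-2s$ and $B = \operatorname{diag}(A,1)$. The parabolic Dirichlet-to-Neumann formula of \cite{BS} --- the variable coefficient analogue of (\ref{dtn2}) --- gives $-c_s \lim_{y\to 0^+} y^a U_y(t,x,y) = (\partial_t - \operatorname{div}(A(x)\nabla))^s u(t,x) = f(t,x)$ for an explicit constant $c_s = c_s(s) > 0$; after replacing $f$ by $f/c_s$, which does not alter membership in $L(\frac{n+2}{2s-1},1)$, the restriction of $U$ to $Q_1^*$ is a solution of (\ref{mainpr1}) with boundary datum $f$ on $Q_1$. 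Since $B$ has precisely the block form appearing in (\ref{mainpr1}) and $A$ is uniformly elliptic with Dini modulus $\omega_A$, the hypotheses of Theorem \ref{mthr} are met.

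Next I would check that the constant $C^2 = \int_{Q_1} U(t,x,0)^2\,dt\,dx + \int_{Q_1^*} U(t,X)^2\,dt\,dX$ of Theorem \ref{mthr} is finite: the first term is $\int_{Q_1} u^2$, finite because $u$ is locally square integrable, and the second is controlled by the weighted energy estimates for the extension problem in \cite{BS}, which bound $\int_{Q_1^*} U^2$ by the $L^2$ norm of $u$ on a slightly larger set plus the relevant norm of $f$. Theorem \ref{mthr} then yields a modulus of continuity $K$, depending only on ellipticity, $\omega_A$, $n$, $s$ and $f$, making $\nabla U$ uniformly continuous on $Q_{1/2}^*$, in particular up to the thin set $\{y=0\}$. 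Since $\nabla_x u(t,x) = \nabla_x U(t,x,0)$, this already gives continuity of $\nabla_x u$ on $Q_{1/2}$; a standard translation-and-parabolic-scaling covering argument --- using that the equation and the class of admissible data, and in particular the scale-invariant norm $\|f\|_{L(\frac{n+2}{2s-1},1)}$, transform correctly under the natural parabolic dilations (up to rescaling $A$ and $f$) --- then promotes this to continuity of $\nabla_x u$ on the whole domain.

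The hard part will not be the continuity statement itself, which is essentially immediate once Theorem \ref{mthr} applies, but the bookkeeping in the reduction: one must verify that the (a priori globally defined) extension of $u$ genuinely restricts to an \emph{admissible} weak solution of the cylindrical problem (\ref{mainpr1}), lying in the correct weighted energy class, so that the constant $C$ is finite and Theorem \ref{mthr} may legitimately be invoked.
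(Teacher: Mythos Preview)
Your proposal is correct and follows exactly the route the paper takes: the paper derives Theorem \ref{main1} as an immediate consequence of Theorem \ref{mthr} via the extension characterization of $(\partial_t - \operatorname{div}(A(x)\nabla))^s$ from \cite{BS}, with no additional argument given. Your write-up in fact supplies more of the reduction details (finiteness of $C$, the covering to pass from $Q_{1/2}$ to the whole domain) than the paper does.
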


	Now to put our results in the right historical perspective, we note that in 1981,  E. Stein in his work \cite{MR607898} showed the following ``limiting" case of Sobolev embedding theorem.
\begin{thrm}\label{stein}
Let $L(n,1)$ denote the standard Lorentz space, then the following implication holds:
\[\nabla v \in L(n,1) \ \implies \ v\  \text{is continuous}.\]  
\end{thrm}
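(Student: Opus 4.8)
The plan is to deduce continuity of $v$ from a quantitative oscillation bound featuring a truncated Riesz potential of $|\nabla v|$, and then to observe that the size of this potential is controlled by the initial segment of the integral defining $\|\nabla v\|_{L(n,1)}$, which tends to $0$ with the truncation radius. Since $L(n,1)\hookrightarrow L^1$ on sets of finite measure, the hypothesis gives $v\in W^{1,1}_{\mathrm{loc}}$, so it suffices to produce, on every ball $B$ compactly contained in the domain of $v$, one common modulus of continuity for $v$ restricted to its Lebesgue points; this restriction then extends to the desired continuous representative. (Equivalently one may phrase the argument as the assertion that the order-one Riesz potential maps $L(n,1)$ continuously into $C_0$, but I will argue directly with oscillations.)

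First I would recall the classical pointwise Poincar\'e inequality: for a ball $B_\rho(z)$ and a Lebesgue point $x\in B_\rho(z)$ of $v$,
\[
\Bigl|\,v(x)-\frac{1}{|B_\rho(z)|}\int_{B_\rho(z)}v\,\Bigr|\ \le\ C(n)\int_{B_\rho(z)}\frac{|\nabla v(w)|}{|x-w|^{n-1}}\,dw .
\]
Applying it on the ball $B_{2r}(x)$ with $r:=|x-y|$ (which contains both $x$ and $y$), once at the point $x$ and once at the point $y$, then enlarging $B_{2r}(x)\subset B_{3r}(y)$, the triangle inequality yields, for Lebesgue points $x,y$,
\[
|v(x)-v(y)|\ \le\ C(n)\bigl(I_1^{3r}|\nabla v|(x)+I_1^{3r}|\nabla v|(y)\bigr),\qquad I_1^{\sigma}g(x):=\int_{B_\sigma(x)}\frac{g(w)}{|x-w|^{n-1}}\,dw .
\]

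The crucial step is to bound the truncated potential by the local part of the Lorentz quasi-norm. Decomposing $B_\sigma(x)$ into dyadic annuli, estimating $|x-w|$ from below in each annulus, replacing $\int g$ in each by the corresponding rearrangement integral via the Hardy--Littlewood inequality $\int_{B_\varrho(x)}g\le\int_0^{\omega_n\varrho^n}g^{\ast}(\tau)\,d\tau$ ($g^{\ast}$ the decreasing rearrangement), and summing, one arrives at
\[
I_1^{\sigma}g(x)\ \le\ C(n)\int_0^{\omega_n\sigma^n}g^{\ast}(\tau)\,\tau^{\frac1n-1}\,d\tau .
\]
I would apply this with $g=h:=|\nabla v|\,\chi_B$, where $B$ is a fixed ball containing every ball $B_{3r}(x)$ that occurs; since $h^{\ast}(\tau)\le(|\nabla v|)^{\ast}(\tau)$ for all $\tau$ (the rearrangement of a restriction is pointwise smaller), I obtain, for Lebesgue points $x,y$ in a fixed sub-ball of $B$ with $|x-y|=r$ small,
\[
|v(x)-v(y)|\ \le\ C(n)\,\eta(r),\qquad \eta(r):=\int_0^{\omega_n(3r)^n}h^{\ast}(\tau)\,\tau^{\frac1n-1}\,d\tau .
\]

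Finally, the hypothesis $\nabla v\in L(n,1)$ says exactly that $\tau\mapsto(|\nabla v|)^{\ast}(\tau)\,\tau^{\frac1n-1}$ lies in $L^1(0,\infty)$, hence so does $\tau\mapsto h^{\ast}(\tau)\,\tau^{\frac1n-1}$; by absolute continuity of the Lebesgue integral, $\eta(r)\to 0$ as $r\to 0$, so $\eta$ is a genuine modulus of continuity. Thus $v$ agrees almost everywhere with a function that is uniformly continuous on the chosen sub-ball, with modulus $C(n)\eta$, and letting $B$ exhaust the domain yields the continuous representative of $v$. The step I expect to be the main obstacle is the rearrangement estimate converting the order-one truncated Riesz potential into the local $L(n,1)$ quasi-norm --- in particular the bookkeeping that keeps the constant independent of the base point while summing the dyadic contributions --- whereas the pointwise Poincar\'e inequality and the final absolute-continuity argument are entirely standard.
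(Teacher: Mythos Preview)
Your argument is correct and is essentially the classical route to Stein's result: pointwise Poincar\'e/Morrey, control of the truncated Riesz potential by the tail of the $L(n,1)$ quasi-norm via Hardy--Littlewood rearrangement, and absolute continuity to extract a modulus. There is nothing to fix; the only cosmetic remark is that the paper records the $L(n,1)$ condition in distribution-function form $\int_0^\infty |\{g>t\}|^{1/n}\,dt<\infty$, whereas you use the equivalent rearrangement form $\int_0^\infty g^\ast(\tau)\,\tau^{1/n-1}\,d\tau<\infty$; the equivalence is a one-line layer-cake computation.

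There is, however, no ``paper's own proof'' to compare against. Theorem~\ref{stein} is quoted in the introduction purely as historical motivation, attributed to Stein \cite{MR607898}, and is not proved anywhere in the manuscript. The paper's analytic work begins only with the extension problem \eqref{mainpr1} and its compactness machinery; Stein's theorem and its corollary Theorem~\ref{st} are invoked solely to situate the main results Theorems~\ref{mthr} and~\ref{main1} as nonlocal analogues of a known endpoint phenomenon. So your write-up supplies a proof the paper deliberately omits.
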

The Lorentz space $L(n,1)$ appearing in Theorem \ref{stein}  consists of those measurable functions $g$ satisfying the condition
\[
\int_{0}^{\infty} |\{x: g(x) > t\}|^{1/n} dt < \infty.
\]
Theorem \ref{stein} can be regarded as the limiting case of Sobolev-Morrey embedding that asserts
\[
\nabla v \in L^{n+\ve} \implies v \in C^{0, \frac{\ve}{n+\ve}}.
\]
Note that indeed $L^{n+\ve} \subset L(n, 1) \subset L^{n}$ for any $\ve>0$ with all the inclusions being strict.  Now Theorem \ref{stein} coupled with the standard Calderon-Zygmund theory  has the following interesting consequence.
\begin{thrm}\label{st} If $\Delta u \in L(n,1)$ then this implies $\nabla u$ is continuous.
\end{thrm}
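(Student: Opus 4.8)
The plan is to deduce Theorem \ref{st} from Stein's embedding (Theorem \ref{stein}) applied not to $u$ itself but to each first-order partial derivative $\partial_i u$. Concretely, it suffices to show that $\partial_i u \in W^{1,1}_{\mathrm{loc}}$ and that $\nabla(\partial_i u) = (\partial_{i1} u, \dots, \partial_{in} u) \in L(n,1)$ locally; then Theorem \ref{stein} forces $\partial_i u$ to be continuous, and letting $i = 1, \dots, n$ gives continuity of $\nabla u$. Since continuity is a local property, I would first reduce to a ball: fix $B = B_r(x_0)$, set $g := \chi_{2B} (\Delta u) \in L(n,1)$, which has compact support and hence also lies in $L^1(\Rn)$ because $L(n,1) \subset L^n \subset L^1_{\mathrm{loc}}$, and write $u = u_1 + u_2$ on $B$, where $u_1 = N * g$ is the Newtonian potential of $g$ and $u_2 = u - u_1$. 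Since $\Delta u_2 = \Delta u - g = 0$ on $B$, the function $u_2$ is harmonic, hence $C^\infty$, in $B$, so $\nabla u_2$ is continuous there and the problem is reduced to $u_1$.

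For $u_1$, classical Calder\'on--Zygmund theory gives $\partial_{ij} u_1 = c_{ij}\, g + R_i R_j g$, where the $R_k$ are the Riesz transforms; thus $D^2 u_1$ is obtained from $g$ by a finite combination of the identity and second-order Riesz transforms, which are Calder\'on--Zygmund singular integral operators. The key analytic input is that such operators are bounded on $L(n,1)$: they are of weak type $(1,1)$ and bounded on $L^2$, so by the Marcinkiewicz interpolation theorem in the Lorentz scale they map $L(p,q) \to L(p,q)$ for every $1 < p < 2$ and every $1 \le q \le \infty$; since the adjoint of $R_i R_j$ is again an operator of this type, duality extends the mapping property to $2 < p < \infty$, while $p = 2$ is immediate. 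In particular $L(n,1) \to L(n,1)$ is bounded for $n \ge 2$ (the case $n = 1$ being elementary), and therefore $D^2 u_1 \in L(n,1)$.

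Having $D^2 u_1 \in L(n,1) \subset L^n_{\mathrm{loc}} \subset L^1_{\mathrm{loc}}$, each $\partial_i u_1$ lies in $W^{1,1}_{\mathrm{loc}}$ with distributional gradient $\nabla(\partial_i u_1) = (\partial_{i1} u_1, \dots, \partial_{in} u_1) \in L(n,1)$. Theorem \ref{stein} then yields that $\partial_i u_1$ is continuous on $B$, hence so is $\partial_i u = \partial_i u_1 + \partial_i u_2$; as $B$ was an arbitrary ball, $\nabla u$ is continuous. The only nontrivial point---and the step I expect to require the most care---is the boundedness of the second-order Riesz transforms on the endpoint Lorentz space $L(n,1)$, i.e.\ the Lorentz-scale refinement of Calder\'on--Zygmund theory together with the verification that the Newtonian-potential decomposition puts the hypotheses of Theorem \ref{stein} in force; once this is in hand, the statement is a clean reduction to Stein's embedding. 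Tracking the constants through this argument moreover produces an explicit modulus of continuity for $\nabla u$ in terms of the $L(n,1)$-norm of $\Delta u$, which is the form in which the parabolic nonlocal analogue is recorded in Theorem \ref{mthr}.
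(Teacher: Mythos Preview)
Your proposal is correct and follows exactly the route the paper indicates: the paper does not give a detailed proof of Theorem~\ref{st} but simply records it as the consequence of ``Theorem~\ref{stein} coupled with the standard Calderon--Zygmund theory,'' and your argument is precisely a careful unpacking of that one-line justification. The decomposition via the Newtonian potential, the identification $\partial_{ij}(N*g)=c_{ij}g+R_iR_jg$, the Lorentz-space boundedness of Calder\'on--Zygmund operators via Marcinkiewicz interpolation, and the final appeal to Stein's embedding applied to each $\partial_i u$ are all standard and correctly assembled.
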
 
Similar result holds  in the parabolic situation for more general variable coefficient operators   when $f \in L(n+2, 1)$. The analogue of Theorem \ref{st}  for general nonlinear and possibly degenerate elliptic and parabolic equations has become accessible not so long ago through a rather sophisticated and powerful nonlinear potential theory (see for instance \cite{MR2823872,MR2900466,MR3174278} and the references therein).  The first breakthrough in this direction came up in the work  of Kuusi and Mingione in \cite{MR3004772} where they showed that the analogue of Theorem \ref{st}  holds for operators modelled after the variational $p$-laplacian. Such a result  was subsequently generalized to $p$-laplacian type systems by the same authors in  \cite{MR3247381}.

Since then, there has been several generalizations of Theorem \ref{st} to operators with other kinds of nonlinearities and in context of  fully nonlinear elliptic equations, the  analogue of Theorem \ref{st} has been established by Daskalopoulos-Kuusi-Mingione in  
\cite{DKM}. We also refer to \cite{Ak} for the corresponding boundary regularity result and also  to \cite{BM} for a similar result in the context of the game theoretic normalized $p-$Laplacian operator. Our main result Theorem \ref{main1} can thus of thought of as  a nonlocal generalization of the Stein's theorem in the sense that $s \to 1$, it exactly reproduces the result Theorem \ref{stein} for time independent $f$. Moreover, Theorem \ref{main1} is also seen to be the limiting case of Theorem 1.2 ii) in \cite{BS} which instead deals gradient H\"older continuity  in case of subcritical scalar perturbations when $f \in L^{p}$ for $p > \frac{n+2}{2s-1}$. As the reader will see,  our proof is based on a somewhat delicate adaptation of the Caffarelli style compactness arguments in \cite{Ak}    to the degenerate Neumann problem in \eqref{mainpr1}. 

The paper is organized as follows. In section \ref{s:n}, we introduce some basic notations and gather some preliminary results.  In section \ref{s:m}, we prove our main result. Finally in the appendix, we give a self contained proof of a basic existence result that is used in our approximation Lemma \ref{closeness}.

	\medskip

	\textbf{Acknowledgment:}
	We are thankful to Agnid Banerjee for various helpful discussions and suggestions.

	\section{Notations and Preliminaries}\label{s:n} We will denote generic point of thick space $\R \times \R^n \times \R$ by $(t,X)=(t,x,y).$ In general we will identify thin space $\R \times \R^n \times \{0\}$ by $\R \times \R^n$ and its generic point will be denoted by $(t,x).$ We will define cubes and balls in thin and thick space as following:
	\begin{align*}
		\qr(x_0,t_0) &=(t_0-\rho^2,t_0+\rho^2)\times \{x \in \R^n:||x-x_0|| < \rho \};\\
		\qr^*(x_0,t_0,y_0) &=(t_0-\rho^2,t_0+\rho^2)\times \{x \in \R^n:||x-x_0|| < \rho \}\times (y_0,y_0+\rho);\\
		\textbf{Q}_{\rho}(x_0,t_0,y_0)&=(t_0-\rho^2,t_0+\rho^2)\times \{x \in \R^n:||x-x_0|| < \rho \}\times (y_0-\rho,y_0+\rho).
	\end{align*}
 Similary we define $B_r$, $B^*_r$ and $\textbf{B}_r.$ For $(t_1,X_1)$ and $(t_2,X_2)$, $$|(t_1,X_1)-(t_2,X_2)|:=\text{max}\{\sqrt{|t_1-t_2|},||X_1-X_2||\},$$ where $||X_1-X_2||$ denotes Euclidean norm. We denote $Q_{\rho}(0,0)$ by $Q_{\rho}$ etc.. 
	For $((t_1,t_2)\times \Omega) \subset \R \times \R^{n+1}$, %$t_2-t_1$ will be height of $ (t_1,t_2)\times\Omega$ and 
	$\partial_p ((t_1,t_2)\times \Omega)$ will denote the parabolic boundary of  $(t_1,t_2)\times \Omega$  and defined as:
	\begin{align*}
		\partial_p((t_1,t_2)\times \Omega)&=(t_1,t_2)\times \Omega\cup[t_1,t_2]\times\partial\Omega.
	\end{align*}
	We will denote $L^2(Q_1^*)$ with measure $y^adtdX$ by $L^2_a(Q_1^*).$ Similary we define $H^1_a(Q_1^*)$ or $H^1_{0,a}(Q_1^*).$ Sometimes we will denote   $C([-1,1];L^2_a(B_1^*))$ by $C(-1,1;L^2_a(B_1^*))$ and other spaces in similar fashion.
	 Let $F=F(t,x)=(F_1,\ldots,F_n,F_{n+1})$ be an $\R^{n+1}$-valued vector field defined on $Q^*_1$
	such that $$F_{n+1} = 0, \hspace{2mm}|F|\in L^2(Q_1^*)\hspace{2mm}\text{and} \hspace{2mm}f=f(t,x)\in L^2(Q_1).$$
	
	Now consider the following local problem
	\begin{equation}\label{mainpr}
		\begin{cases}
			y^a \partial_t U - \text{div}(y^a B(x) \nabla U)=- \text{div}(y^a F)&\hbox{in}~Q^*_1 \\
			-y^a U_y\big|_{y=0}=f&\hbox{on}~Q_1.
		\end{cases}
	\end{equation}

	We say that $U \in C([-1,1];L^2_a(B_1^*)) \cap L^2([-1,1];H^1_a(B_1^*))$ is a weak solution to (\ref{mainpr}) if for every $-1<t_1<t_2<1$ 
	\begin{align}\label{weaksoldef}
		\int_{B^*_1}y^a U \phi|_{t=t_1}^{t=t_2}dX-\int_{t_1}^{t_2}\int_{B^*_1}y^a U \partial_t \phi dt dX +\int_{t_1}^{t_2}\int_{B^*_1}y^a (B(x) \nabla U) \cdot \nabla \phi dt dX\\
		=\int_{t_1}^{t_2}\int_{B_1}f(t,x)\phi(t,x,0)dt dX +\int_{t_1}^{t_2}\int_{B^*_1} y^a F \cdot \nabla \phi dt dX
	\end{align}
	holds for all $\phi \in H^1([-1,1];L^2_a(B_1^*)) \cap L^2([-1,1];H^1_a(B_1^*))$ such that $\phi=0$ on $\partial_pQ_1^*\setminus (Q_1 \times \{0\}).$ Before proceeding further, we make the following discursive remark.
	
	\begin{rmrk} We say a constant to be universal if it depends only on ellipticity, $n,$ $s,$ $L\big(\frac{n+2}{2s-1},1\big)$ norm of $f$ and the fact that $\int_{0}^{1}\frac{\omega_A(s)}{s}ds < \infty.$
	\end{rmrk} 	
	
	We will need the following lemmas from \cite{BS}.
	\begin{lemma}\label{energy}
		Assume that $U$ is a weak solution to \eqref{mainpr} with $F$ as described above.
		Then, for each $\varphi \in C^{\infty}_c(Q_1\times[0,1))$ and for each  $t_1,t_2  \in (-1,1)$ with $t_1 < t_2$, 
		\begin{align*}
			\sup_{t_1<t<t_2}\int_{B^*_1}y^aU^2 \varphi^2 \,dX + \int^{t_2}_{t_1}\int_{B^*_1} &y^a \varphi^2 |\nabla U|^2\,dXdt \\
			& \leq C\bigg[\int^{t_2}_{t_1}\int_{B^*_1} y^a \left(( |\partial_t (\varphi^2)|+|\nabla \varphi|^2)U^2 + |F|^2 \varphi^2 \right)\,dXdt  \\
			&\qquad+ \int^{t_2}_{t_1}\int_{B_1} (\varphi(t,x,0))^2 |U(t,x,0)||f(t,x)|\,dxdt  \bigg] \\
			&\qquad+ \int_{B^*_1}y^aU^2(t_1,X) \varphi^2(t_1,X) \,dX,
		\end{align*}
		where $C = C\big(n, s, \mathrm{ellipticity}\big)>0$.
	\end{lemma}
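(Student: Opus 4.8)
\textbf{Proof plan for Lemma \ref{energy}.}

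The plan is to use the Caccioppoli-type test function $\phi = \varphi^2 U$ in the weak formulation \eqref{weaksoldef}, but this is not quite admissible since $U$ itself is only in the energy class, so first I would work with a Steklov average $U_h$ of $U$ in the time variable (or equivalently a time mollification) so that $\partial_t U_h \in L^2_a$, test the mollified equation with $\varphi^2 U_h$, and then pass to the limit $h \to 0$ at the very end. With this understood, I would proceed formally with $\phi = \varphi^2 U$ and treat the four resulting terms. The parabolic term $\int y^a U \partial_t(\varphi^2 U)$ splits via $\partial_t(\varphi^2 U) = \varphi^2 \partial_t U + U \partial_t(\varphi^2)$ and, after integrating by parts in $t$, produces the good boundary term $\int_{B_1^*} y^a U^2 \varphi^2 \big|_{t_1}^{t_2}$ together with an error $\tfrac12 \int\!\!\int y^a U^2 |\partial_t(\varphi^2)|$. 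The diffusion term gives $\int\!\!\int y^a (B\nabla U)\cdot\nabla(\varphi^2 U) = \int\!\!\int y^a \varphi^2 (B\nabla U)\cdot\nabla U + 2\int\!\!\int y^a \varphi U (B\nabla U)\cdot\nabla\varphi$; by uniform ellipticity of $B$ (which holds since $A$ is elliptic and the extra diagonal entry is $1$) the first piece is bounded below by $\lambda \int\!\!\int y^a \varphi^2 |\nabla U|^2$, while the cross term is absorbed by Young's inequality at the cost of a further $C\int\!\!\int y^a |\nabla\varphi|^2 U^2$.

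Next I would handle the two right-hand-side contributions. The source term $\int_{t_1}^{t_2}\!\int_{B_1} f(t,x)\,\varphi^2(t,x,0)\,U(t,x,0)\,dx\,dt$ is left as is — it appears exactly in the claimed estimate — so nothing needs to be done beyond inserting absolute values. The vector field term $\int\!\!\int y^a F\cdot\nabla(\varphi^2 U) = \int\!\!\int y^a \varphi^2 F\cdot\nabla U + 2\int\!\!\int y^a \varphi U F\cdot\nabla\varphi$ is again split, and Young's inequality applied to $\varphi^2 F\cdot\nabla U$ gives $\tfrac{\lambda}{4}\int\!\!\int y^a\varphi^2|\nabla U|^2 + C\int\!\!\int y^a \varphi^2 |F|^2$, with the gradient piece absorbed into the left-hand side; the remaining $\varphi U F\cdot\nabla\varphi$ term is estimated by $C\int\!\!\int y^a(|\nabla\varphi|^2 U^2 + \varphi^2 |F|^2)$. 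Since $\varphi$ is supported in $Q_1\times[0,1)$, every resulting integral is over the stated domain. Collecting the bad terms on the right and the two good terms (the supremum in $t$ of $\int y^a U^2\varphi^2$ and $\int\!\!\int y^a\varphi^2|\nabla U|^2$) on the left — taking the supremum over $t_2' \in (t_1,t_2)$ in the first good term, which is legitimate since the estimate holds for every such upper limit — yields precisely the asserted inequality, with $C$ depending only on $n$, $s$ and the ellipticity constants.

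The main technical obstacle is the rigorous justification of the test function: one cannot literally use $\varphi^2 U$ because $\partial_t U$ is not a function, so the Steklov averaging argument must be set up carefully, checking that $\varphi^2 U_h$ is an admissible test function in the sense of \eqref{weaksoldef} (it vanishes on the lateral parabolic boundary because $\varphi$ does, and it lies in the right parabolic Sobolev space), that the degenerate weight $y^a$ causes no difficulty since $\varphi$ is compactly supported away from the lateral boundary of $B_1^*$ and $y^a \in A_2$, and that each term converges as $h\to 0$; the boundary term at $t_1$ also requires knowing $U \in C([-1,1];L^2_a(B_1^*))$, which is part of the definition of weak solution. Once the averaging is in place the rest is the routine Caccioppoli bookkeeping sketched above. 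Since this lemma is quoted from \cite{BS}, I would in fact simply cite it, but the argument above is the one to reconstruct if a self-contained proof is wanted.
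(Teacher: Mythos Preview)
Your proposal is correct and is exactly the standard Caccioppoli argument one would use here; the paper itself does not give a proof of this lemma but simply imports it from \cite{BS}, as you correctly noted. There is nothing to compare, and your reconstruction (test with $\varphi^2 U_h$, split the parabolic and elliptic terms, use ellipticity and Young's inequality, absorb, then pass $h\to 0$ using $U\in C([-1,1];L^2_a)$) is precisely how such an estimate is obtained.
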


	\begin{lemma}\label{reg}
		Let $W$ be a weak solution to 
		\begin{equation}\label{16JULY4}
			\left\{
			\begin{aligned}{}
				y^a \partial_tW   &= \operatorname{div}(y^a \nabla W) &\text{in} \hspace{2mm}Q^*_{1} \\ 
				-y^a W_y\big|_{y=0} &= 0 &\text{on} \hspace{2mm}Q_{1}.
			\end{aligned}
			\right.
		\end{equation}
		Then we have the following estimates:
		\begin{enumerate}[$(i)$]
			\item For $k \in \mathbb{N} \cup \{0\}$, multi-index $\alpha$ and  $Q_r(t_0,x_0)\subset Q_1,$ we have
			$$\sup_{Q_{\frac{r}{2}}(t_0,x_0) \times [0, \frac{r}{2}) } \Big|\partial^k_t D^{\alpha}_x W\Big| \leq \frac{C(n,s)}{r^{k+|\alpha|}} \text{osc}_{Q_{r}(t_0,x_0) \times [0, r)} W.$$
			\item For $Q_r(t_0,x_0) \subset Q_1$, 
			$$\max_{Q_{\frac{r}{2}}(t_0,x_0) \times [0, \frac{r}{2})} \big|W\big| \leq C(r,n,s)\|W\|_{L^2(Q_r(t_0,x_0)\times[0,r),y^a)}.$$ 
			\item For all $0\leq y<\frac{1}{2}$,
			$$\sup_{(t,x)\in Q_{\frac{1}{2}}} \big|W_y(t,x,y)\big| \leq C(n,s)\|W\|_{L^2(Q_1^*,y^a)}y.$$
		\end{enumerate}
	\end{lemma}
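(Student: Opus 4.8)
The plan is to reduce the lemma to the regularity theory for parabolic equations with a Muckenhoupt $A_2$ weight, by reflecting $W$ evenly across the characteristic hyperplane $\{y=0\}$ and then exploiting the translation invariance of the model operator in the variables $(t,x)$. First I would set $\widetilde W(t,x,y):=W(t,x,|y|)$ on the doubled cylinder $Q_1\times(-1,1)$ and check that the homogeneous Neumann condition $-y^aW_y|_{y=0}=0$ is precisely the compatibility condition that prevents a singular measure from appearing along $\{y=0\}$: inserting into the weak formulation of \eqref{16JULY4} a test function $\phi\in C^\infty_c(Q_1\times(-1,1))$ that need not vanish on $\{y=0\}$, the boundary flux term disappears, and one verifies that $\widetilde W$ is a weak solution of
\[
|y|^a\partial_t\widetilde W=\operatorname{div}(|y|^a\nabla\widetilde W)\qquad\text{in }Q_1\times(-1,1).
\]
Since $a=1-2s\in(-1,1)$, the weight $|y|^a$ is $A_2$ on $\mathbb R^{n+2}$, so the parabolic De Giorgi--Nash--Moser machinery for degenerate equations (Chiarenza--Serapioni \cite{CSe}) applies to $\widetilde W$.

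From the weighted Moser iteration I would obtain local boundedness: writing $\textbf{Q}_\rho:=Q_\rho(t_0,x_0)\times(-\rho,\rho)$ for the even reflection of $Q_\rho(t_0,x_0)\times[0,\rho)$, one has $\max_{\textbf{Q}_{r/2}}|\widetilde W|\le C(r,n,s)\,\|\widetilde W\|_{L^2_{|y|^a}(\textbf{Q}_r)}$, which after unfolding the reflection is exactly $(ii)$; the same theory gives the oscillation decay $\operatorname{osc}_{\textbf{Q}_{r/2}}\widetilde W\le\theta\,\operatorname{osc}_{\textbf{Q}_r}\widetilde W$ with $\theta=\theta(n,s)<1$. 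For the derivative estimates $(i)$, the key point is that the reflected equation has coefficients depending only on $y$, so for a unit vector $e$ in a $(t,x)$-direction the difference quotient $h^{-1}(\widetilde W(\cdot+he)-\widetilde W)$ solves the same equation; the weighted Caccioppoli inequality (the interior analogue on the doubled cylinder of Lemma \ref{energy} with $F\equiv0$, $f\equiv0$) bounds these uniformly in $h$, so passing to the limit shows that $\partial_t^kD^\alpha_x\widetilde W$ solves the reflected equation for all $k$ and $\alpha$. Applying local boundedness to $\partial_t^kD^\alpha_x\widetilde W$, then using Caccioppoli repeatedly to trade each derivative for a factor $\sim r^{-1}$ against an $L^2_{|y|^a}$ norm on a slightly larger cylinder, and—since constants solve the equation—subtracting a constant $c$ with $\|W-c\|_{L^\infty(\textbf{Q}_r)}=\operatorname{osc}_{\textbf{Q}_r}W$ so that the weighted-measure factors cancel, yields $(i)$; the homogeneity $r^{-(k+|\alpha|)}$ is forced by the parabolic scaling $(t,x,y)\mapsto(t_0+r^2t,x_0+rx,ry)$, under which the equation is invariant (and for $k=|\alpha|=0$ the statement is just local boundedness again).

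For $(iii)$, I would rewrite the equation on $Q_1^*$ as $\partial_y(y^aW_y)=y^a(\partial_tW-\Delta_xW)$ and integrate in $y$, using $\lim_{y\to0^+}y^aW_y=0$, to get
\[
W_y(t,x,y)=y^{-a}\int_0^y\tau^a\,(\partial_tW-\Delta_xW)(t,x,\tau)\,d\tau .
\]
By $(i)$ applied to $\partial_tW$ and to $\Delta_xW=\sum_iD^2_{x_i}W$ (which, in their $L^2$ form, are bounded on $Q^*_{3/4}$ by a multiple $M$ of $\|W\|_{L^2_a(Q_1^*)}$), and since $a+1=2(1-s)>0$, this gives $|W_y(t,x,y)|\le M\,y^{-a}\int_0^y\tau^a\,d\tau=\frac{M}{2(1-s)}\,y$ for $0\le y<1/2$, which is $(iii)$ (the endpoint $y=0$ forcing $W_y(t,x,0)=0$).

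The step I expect to demand the most care is the reflection itself: making rigorous that the even extension is a genuine weak solution of the degenerate equation across $\{y=0\}$ when the weight $|y|^a$ is \emph{singular}—which is exactly the regime $s\in(1/2,1)$, where $a<0$—together with checking that the energy spaces transform correctly. Once that is in place, $(ii)$, $(i)$ and $(iii)$ are routine consequences of $A_2$-weighted parabolic theory and the translation invariance of the model operator; this is essentially the route followed in \cite{BS}.
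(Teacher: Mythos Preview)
The paper does not prove this lemma at all: it is simply quoted from \cite{BS} (see the sentence ``We will need the following lemmas from \cite{BS}'' immediately preceding Lemma~\ref{energy} and Lemma~\ref{reg}). Your proposal is a correct and fairly complete sketch of the argument that underlies the result in \cite{BS}---even reflection to a full cylinder, the Chiarenza--Serapioni $A_2$-weighted parabolic theory for $(ii)$, translation invariance in $(t,x)$ plus iterated Caccioppoli for $(i)$, and direct integration of $\partial_y(y^aW_y)=y^a(\partial_tW-\Delta_xW)$ for $(iii)$---and you yourself note this at the end. There is nothing to compare.
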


	\section{Proof of Main Theorem}\label{s:m}
Our proof is based on an appropriate adaptation of  compactness arguments, which has its roots in a fundamental work of Caffarelli in \cite{Ca}.  We will begin by proving the relevent compactness lemma.
\begin{lemma}\label{closeness}
	Let $U$ be a weak solution of (\ref{mainpr})  with
	\begin{align}\label{l2u}
		\int_{Q_1}U(t,x,0)^2 dt dx + \int_{Q^*_{1}}y^a U^2 dt dX \leq 1
	\end{align}
	Then, given any $\epsilon >0$, there exixts a $\delta=\delta(\epsilon,n,s,ellipticity)>0$  such that if 
	\begin{align}\label{delta}
		\int_{Q_1} f^2 dt dx \leq \delta^2, \hspace{2mm} \int_{Q_1^*} y^a |F|^2 dt dX \leq \delta^2,\hspace{1mm} \text{and} \hspace{1mm}  \omega_{A}(1) \leq \delta^2,
	\end{align}
	then there exists $V$ which solves weakly 
	\begin{equation*}
		\left\{
		\begin{aligned}{}
			\operatorname{div}\big(y^a \nabla V\big)&=y^a \partial_t V &\mathrm{in}~(-3/4,3/4) \times B^*_{1/2} \\
			\lim_{y \rightarrow 0}y^a V_y&=0&\mathrm{on}~(-3/4,3/4) \times B_{1/2} \times \{0\}
		\end{aligned}
		\right.
	\end{equation*}
	such that 
	\begin{align*}
		\int_{Q^*_{1/2}}y^a|U-V|^2 dt dX < \epsilon^2 \hspace{2mm}\text{and} \hspace{2mm} \int_{Q_{1/2}}|U-V|^2 dt dx < \epsilon^2.
	\end{align*}
\end{lemma}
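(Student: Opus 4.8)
\textbf{Overall strategy.} This is the standard compactness/contradiction argument in the Caffarelli style, so the plan is to argue by contradiction. Suppose the conclusion fails for some fixed $\epsilon_0 > 0$. Then there exists a sequence of weak solutions $U_k$ of \eqref{mainpr} in $Q_1^*$, with associated data $F_k$, $f_k$ and coefficient matrices $A_k$, satisfying the normalization \eqref{l2u} together with $\int_{Q_1} f_k^2 \le 1/k^2$, $\int_{Q_1^*} y^a |F_k|^2 \le 1/k^2$, and $\omega_{A_k}(1) \le 1/k^2$, but for which no solution $V$ of the limiting Neumann problem on $(-3/4,3/4)\times B_{1/2}^*$ lies within $\epsilon_0$ of $U_k$ in the two indicated $L^2$ norms. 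The goal is then to extract a limit $U_\infty$ of (a subsequence of) the $U_k$ and show it solves the limiting equation, contradicting the choice of the $U_k$.

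\textbf{Key steps, in order.} First, I would apply the Caccioppoli inequality Lemma \ref{energy} with a suitable cutoff $\varphi$ supported in $Q_1 \times [0,1)$ and equal to $1$ on, say, $Q_{3/4}\times[0,3/4)$, combined with the normalization \eqref{l2u} and the smallness of $f_k$, $F_k$, to obtain a uniform bound
\begin{align*}
\sup_{-3/4 < t < 3/4}\int_{B^*_{3/4}} y^a U_k^2 \, dX + \int_{Q^*_{3/4}} y^a |\nabla U_k|^2\, dt\,dX \le C
\end{align*}
with $C$ universal. This gives weak compactness in $L^2([-3/4,3/4]; H^1_a(B^*_{3/4}))$. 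Second, to upgrade weak $L^2(H^1)$ compactness to \emph{strong} $L^2_a$ convergence (which is what the conclusion requires), I would derive a uniform bound on $\partial_t U_k$ in a negative-order space: testing the weak formulation \eqref{weaksoldef} against $\phi \in L^2([-3/4,3/4]; H^1_{0,a}(B^*_{3/4}))$ and using the energy bound plus the smallness of the data shows $\partial_t U_k$ is bounded in $L^2([-3/4,3/4]; (H^1_{0,a})^*)$. An Aubin--Lions type compactness lemma adapted to the weighted spaces (the weight $y^a$ is $A_2$, so the weighted Sobolev spaces embed compactly into $L^2_a$ on bounded sets — this is where the reference to Chiarenza--Serapioni \cite{CSe} and the $A_2$ theory pays off) then yields a subsequence with $U_k \to U_\infty$ strongly in $L^2_a(Q^*_{1/2})$. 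For the trace term $\int_{Q_{1/2}}|U_k - U_\infty|^2\,dt\,dx$ I would use the weighted trace inequality together with the $H^1_a$ bound on the difference on a slightly larger cube to get strong convergence of traces as well. Third, I would pass to the limit in \eqref{weaksoldef}: the terms linear in $U_k$ converge by weak/strong convergence; the flux term $\int y^a (A_k \nabla U_k)\cdot\nabla\phi$ converges to $\int y^a \nabla U_\infty \cdot \nabla\phi$ because $A_k \to I$ uniformly (since $\omega_{A_k}(1)\to 0$ forces $\mathrm{osc}\, A_k \to 0$, and after normalizing $A_k(0)\to I$ along the subsequence) while $\nabla U_k \rightharpoonup \nabla U_\infty$ weakly; and the source terms vanish since $f_k \to 0$ in $L^2(Q_1)$ and $F_k \to 0$ in $L^2_a(Q_1^*)$. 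Hence $U_\infty$ is a weak solution of the limiting problem
\begin{align*}
\operatorname{div}(y^a \nabla U_\infty) = y^a \partial_t U_\infty \ \text{ in } (-3/4,3/4)\times B^*_{1/2}, \qquad \lim_{y\to 0} y^a (U_\infty)_y = 0.
\end{align*}
Taking $V = U_\infty$ and $k$ large gives $\int_{Q^*_{1/2}} y^a |U_k - V|^2 < \epsilon_0^2$ and $\int_{Q_{1/2}} |U_k - V|^2 < \epsilon_0^2$, the desired contradiction.

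\textbf{Main obstacle.} The delicate point is the strong $L^2_a$ compactness in step two: one must handle the degenerate/singular weight $y^a$ (recall $a = 1-2s \in (-1,1)$ can be negative) carefully, both in the Aubin--Lions compactness argument in weighted spaces and in controlling the time derivative $\partial_t U_k$ in the dual weighted space uniformly — note that the estimate on $\partial_t U_k$ must be obtained \emph{up to the Neumann boundary} $\{y=0\}$, so the admissible test functions $\phi$ need not vanish there, and one has to absorb the resulting boundary term $\int_{Q_1} f_k \phi(\cdot,0)$ using the trace inequality, which again invokes the $A_2$ structure. A secondary technical point is justifying that the normalized matrices $A_k$ can be taken to converge (to the identity, after an affine change of variables) rather than merely having oscillation tending to zero; this is routine but must be stated. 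Existence of the limiting solution $V$ with the prescribed normalization and energy bounds — needed to make the statement non-vacuous — is exactly the content of the self-contained existence result proved in the appendix, which I would cite at the appropriate place.
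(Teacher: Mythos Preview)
Your proposal is correct but takes a genuinely different route from the paper. The paper does \emph{not} argue by contradiction/compactness: it constructs $V$ directly as the weak solution of the constant-coefficient problem on $(-1/2,3/4)\times B^*_{1/2}$ with $V=U$ prescribed on the lateral and initial parabolic boundary (this is exactly where Theorem \ref{exis} is invoked), passes to Steklov averages $U_h,V_h$, subtracts the two weak formulations tested against $U_h-V_h$, and estimates the resulting energy identity term by term --- the $(B-I)\nabla U_h$ term via $\omega_A(1)$ and Lemma \ref{energy}, the $f$ and $F$ terms by their smallness, and then closes with the weighted trace inequality and Poincar\'e to absorb the boundary $L^2$ norm into the gradient term. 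This direct ``harmonic replacement'' approach yields an essentially explicit $\delta(\epsilon)$ and sidesteps any compactness machinery. Your compactness argument also works and is the more classical Caffarelli template; its cost is the weighted Aubin--Lions step (correct for $A_2$ weights, but nonstandard enough that it would need a precise reference or proof), and it gives no quantitative $\delta$. One small correction: in your scheme the appendix existence result is \emph{not} needed --- $V=U_\infty$ is produced by compactness, not by solving a boundary value problem --- so your final remark misplaces the role of Theorem \ref{exis}; in the paper it is used to build $V$, not merely to make the statement non-vacuous.
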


\begin{proof}
	\noindent Let $V$ be the solution of the following problem:
	\begin{equation}\label{Vproblem}
		\left\{
		\begin{aligned}{}
			\text{div}\big(y^a \nabla V\big)&=y^a \partial_t V &\mathrm{in}~(-1/2,3/4) \times B^*_{1/2} \\
			V &= U&\mathrm{on}~\partial_{p} ((-1/2,3/4) \times B^*_{1/2}) \setminus \big\{ (-1/2,3/4) \times B_{1/2} \times \{0\}\big\}\\
			\lim_{y \rightarrow 0}y^a V_y&=0&\mathrm{on}~(-1/2,3/4) \times B_{1/2} \times \{0\}.
		\end{aligned}
		\right.
	\end{equation}
	\noindent  Note that we have the existence and uniqueness of the above problem by Theorem \ref{exis}. Now using the regularity of $V$ (Lemma \ref{reg}) and the properties of Steklov average, we have, the Steklov average of $V,$ i.e., $V_h(t,x,y)=\fint_t^{t+h}V(\tau,x,y) d\tau ,$ $h<1/4,$ solves
	\begin{equation}\label{vh}
		\left\{
		\begin{aligned}{}
			\text{div}\big(y^a \nabla V_h\big)&=y^a \partial_t V_h &\mathrm{in}~Q^*_{1/2} \\
			V_h &= U_h&\mathrm{on}~\partial_{p} Q_{1/2}^{*} \setminus \left\{Q_{1/2} \cup \{t=-1/2\}\right\}\\
			\lim_{y \rightarrow 0}y^a \big(V_{h}\big)_y&=0&\mathrm{on}~Q_{1/2}.
		\end{aligned}
		\right.
	\end{equation}
	Now, multiplying \eqref{vh} by $\big(U_h - V_h\big)$ ($U_h$ is Steklov average of $U$) and integrate by parts to get
	\begin{equation}\label{20JULY3}
		- \int_{Q^*_{1/2}} y^a  \nabla V_h \nabla \big(U_h - V_h\big)dtdX -  \int_{Q^*_{1/2}} y^a  \partial_t V_h  \big(U_h - V_h\big)dtdX = 0.
	\end{equation}
	  By standard arguments, we know $U$ can be replaced by $U_h$ in  (\ref{weaksoldef}). Also $V_h$ solves (\ref{vh}) so we can put $\phi = \big(U_h - V_h\big)\eta(t)$ in (\ref{weaksoldef}), where $\eta(t)$ is choosen such that $\eta(t)=1$ in $(-1/2,1/2)$ and compactly supported in $(-5/8,5/8)$. Hence, after integrating by parts with respect to $t$ we get
	\begin{equation}\label{20JULY4}
		\begin{aligned}
			\int_{Q^*_{1/2}} y^a  \partial_t U_h \big(U_h - V_h\big)dtdX +  \int_{Q^*_{1/2}} y^a B(x) \nabla U_h \nabla \big(U_h - V_h\big)dtdX &= \int_{Q_{1/2}}   f_h \big(U_h - V_h\big)dtdx\\& +  \int_{Q^*_{1/2}} y^a  F_h \nabla \big(U_h - V_h\big)dtdX.
		\end{aligned}
	\end{equation}
	On adding \eqref{20JULY3} and \eqref{20JULY4}, we get
	\begin{equation}
		\begin{aligned}
			&\int_{Q^*_{1/2}} y^a  \partial_t \big(U_h - V_h\big)  \big(U_h - V_h\big)dXdt +  \int_{Q^*_{1/2}} y^a \big(B(x) - I\big)  \nabla U_h \nabla \big(U_h - V_h\big)dXdt \\&+  \int_{Q^*_{1/2}} y^a \big|\nabla \big(U_h - V_h\big)\big|^{2}dXdt
			= \int_{Q_{1/2}}   f_h \big(U_h - V_h\big)dxdt +  \int_{Q^*_{1/2}} y^a  F_h \nabla \big(U_h - V_h\big)dXdt.
		\end{aligned}
	\end{equation}
	\noindent Using Fundamental theorem of calculus in $t$ variable and Young's inequality with $\varepsilon$ we get, 
	\begin{equation}
		\begin{aligned}
		I_h(0) +	\int_{Q^*_{1/2}} y^a \big|\nabla \big(U_h - V_h\big)\big|^{2}dXdt\leq  \frac{1}{2}&\int_{Q^*_{1/2}} y^a \big|\nabla \big(U_h - V_h\big)\big|^{2} dXdt+ \frac{1}{2}\int_{Q^*_{1/2}} y^a \big|B(x) - I\big|^2  \big|\nabla U_h\big|^2 dXdt\\&+ \frac{1}{4\varepsilon}\int_{Q_{1/2}}   f_h^2dxdt
			+ \varepsilon\int_{Q_{1/2}} \big(U_h - V_h\big)^2dxdt\\ &+  \int_{Q^*_{1/2}} y^a  F_h^2dXdt+  \frac{1}{4} \int_{Q^*_{1/2}} y^a   \big|\nabla \big(U_h - V_h\big)\big|^2dXdt,
		\end{aligned}
	\end{equation}
where $I_h(0)=\int_{B^*_{1/2}} y^a \big(U_h - V_h\big)(0)dX.$ We rewrite above equation as 
\begin{align}\label{main}
 I_h(0)+	\frac{1}{4}	\int_{Q^*_{1/2}} y^a \big|\nabla \big(U_h - V_h\big)\big|^{2}dXdt\leq &\int_{Q^*_{1/2}} y^a \big|B(x) - I\big|^2  \big|\nabla U_h\big|^2 dXdt
		+ \varepsilon\int_{Q_{1/2}} \big(U_h - V_h\big)^2dxdt\\ &+\frac{1}{4\varepsilon}\int_{Q_{1/2}}   f_h^2dxdt+  \int_{Q^*_{1/2}} y^a  F_h^2dXdt
\end{align}
Note that we can make third and fourth term small by taking $\delta$ small enough. For the first term, take $\varphi$ compactly supported in ${Q^*_{3/4}}$ such that $\varphi=1$ in ${Q^*_{1/2}}$ and use Lemma \ref{energy} to get
\begin{align*}
	\frac{1}{2}&\int_{Q^*_{1/2}} y^a \big|B(x) - I\big|^2  \big|\nabla U_h\big|^2 dXdt\\
	 \leq &\frac{\omega_{A}(1)}{2}\int_{Q^*_{1/2}} y^a  \big|\nabla U_h\big|^2 dXdt\\
	\leq & C\frac{\omega_{A}(1)}{2}\left[\int_{-1/2}^{1/2}\int_{B^*_{1}} y^a(U_h^2+F_h^2) dX dt +\int_{-1/2}^{1/2}\int_{B_{1}} |U_h(t,x,0)||f_h(t,x)| dx dt\right]\\
	\leq & C\frac{\omega_{A}(1)}{2}\left[\int_{-1/2}^{1/2}\int_{B^*_{1}} y^a(U_h^2+F_h^2) dX dt +\frac{1}{2}\int_{-1/2}^{1/2}\int_{B_{1}} |U_h(t,x,0)|^2 dx dt+\frac{1}{2}\int_{-1/2}^{1/2}\int_{B_{1}}|f_h(t,x)|^2 dx dt\right]\\
	\leq & C\frac{\omega_{A}(1)}{2}\left[\int_{Q^*_{1}} y^a(U^2+F^2) dX dt +\frac{1}{2}\int_{Q_{1}} |U(t,x,0)|^2 dx dt+\frac{1}{2}\int_{Q_{1}}|f(t,x)|^2 dx dt\right]\\
	\leq & C \omega_{A}(1)(1+\delta^2),
\end{align*}
where $C=C(n,s, \text{ellipticity})>0.$ Also by trace theorem \cite{Nek}, we have 
\begin{align}\label{f2}
	\int_{Q_{1/2}} \big(U_h - V_h\big)^2dxdt \leq C_T\int_{Q^*_{1/2}} y^a\big(U_h - V_h\big)^2dXdt+C_T \int_{Q^*_{1/2}} y^a|\nabla\big(U_h - V_h\big)|^2dXdt
\end{align}
where $C_T(n,s)$ is constant from trace theorem.
Hence, (\ref{main}) becomes 
\begin{align}\label{f1}
 I_h(0)+	\frac{1}{4}\int_{Q^*_{1/2}} y^a \big|\nabla \big(U_h - V_h\big)\big|^{2}dXdt
	\leq  &C \omega_{A}(1)(1+\delta^2) + \varepsilon C_T\int_{Q^*_{1/2}} y^a\big(U_h - V_h\big)^2dXdt\\
	&+ \varepsilon C_T \int_{Q^*_{1/2}} y^a|\nabla\big(U_h - V_h\big)|^2dXdt +\frac{\delta^2}{4\varepsilon} +\delta^2.
\end{align}
Also, for each time label we will apply Poincare inequality \cite{FKS} to get 
\begin{align*}
 I_h(0)+	((1/4-\varepsilon C_T)C_P -\varepsilon C_T)\int_{Q^*_{1/2}} y^a  \big(U_h - V_h\big)^{2}dXdt \;
	\leq  \;C \omega_{A}(1)(1+\delta^2) + 
	\frac{\delta^2}{4\varepsilon} +\delta^2
\end{align*}
where $C_P(n,s)$ is constant in Poincare inequality.\\
Now, first choose $\varepsilon >0$ small enough such that $(1/4-\varepsilon C_T)C_P -\varepsilon C_T) >0$ than choose $\delta$ to get
$$ I_h(0)+\int_{Q^*_{1/2}} y^a  \big(U_h - V_h\big)^{2}dXdt \leq \epsilon^2.$$
Also, from (\ref{f2}) and (\ref{f1}) we have 
$$	\int_{Q_{1/2}} \big(U_h - V_h\big)^2dxdt \leq M \epsilon^2 +M I_h(0)$$
where $M$ is a universal constant. Since $U \in C([-1,1],L_a^2(B_1^*)) \cap L^2([-1,1],H_a^1(B_1^*))$, $U_h-V_h \rightarrow U-V$, $\nabla(U_h-V_h) \rightarrow \nabla(U-V)$ in $L^2_a(Q_{1/2}^*)$ and using \ref{Vproblem}, we have $I_h(0) \rightarrow 0$. Hence by trace theorem $(U_h-V_h)(t,x,0) \rightarrow (U-V)(t,x,0)$ in $L^2(Q_{1/2}).$ This completes the proof of the lemma.
\end{proof}
\begin{lemma}\label{linear}
	 There exist $0< \delta, \lambda <1$ (depending on $n$, $s$ and ellipticity), 
	a linear function $\ell(x) =\mathcal{A} + \mathcal{B} \cdot x$ and constant $C =C(n,s)>0$
	such that for any solution $U$ of (\ref{mainpr}) which satisfies (\ref{l2u}) and (\ref{delta}),
	$$\frac{1}{\lambda^{n+2}} \int_{Q_{\lambda}} |U(t,x,0)-\ell(x)|^2 \, dt \, dx + \frac{1}{\lambda^{n+3+a}} \int_{Q^*_{\lambda}} y^a|U-\ell(x)|^2 \, dt \, dX < \lambda^3 $$
	and $|\mathcal{A}| + |\mathcal{B}| \leq C.$
\end{lemma}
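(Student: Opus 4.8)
The plan is to run the standard Caffarelli-type compactness/iteration argument: use Lemma \ref{closeness} to approximate $U$ by a solution $V$ of the constant-coefficient degenerate Neumann problem, exploit the interior smoothness of $V$ provided by Lemma \ref{reg} to approximate $V$ by the first-order Taylor polynomial of $V(0,\cdot,0)$ at the origin, and then balance the two errors by choosing the scale $\lambda$ and the smallness parameter $\delta$. The linear function will be $\ell(x) = V(0,0,0) + \nabla_x V(0,0,0)\cdot x$, so $\mathcal{A} = V(0,0,0)$ and $\mathcal{B} = \nabla_x V(0,0,0)$.

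First I would fix $\lambda \in (0,1/2)$ small, to be chosen, and apply Lemma \ref{reg}(i) to $W = V$ (after a harmless translation/scaling so that $V$ is a solution on $Q^*_{1/2}$ satisfying $\|V\|_{L^2_a} \le C$, which follows from the closeness estimate together with \eqref{l2u}). This gives $|V(0,0,0)| + |\nabla_x V(0,0,0)| \le C(n,s)$, hence $|\mathcal A| + |\mathcal B| \le C$, and moreover control of the second-order derivatives $\partial_t V$ and $D^2_x V$, so that by Taylor's theorem in $(t,x)$ together with Lemma \ref{reg}(iii) to handle the $y$-direction ($|V_y(t,x,y)| \le C\|V\|_{L^2_a}\,y$, which gives $|V(t,x,y) - V(t,x,0)| \le Cy^2$),
\begin{align*}
|V(t,x,y) - \ell(x)| \le C\big(|t| + |x|^2 + y^2\big) \le C\lambda^2 \qquad \text{in } Q^*_\lambda.
\end{align*}
Therefore $\frac{1}{\lambda^{n+2}}\int_{Q_\lambda} |V(t,x,0) - \ell(x)|^2\,dt\,dx \le C\lambda^4$ and similarly $\frac{1}{\lambda^{n+3+a}}\int_{Q^*_\lambda} y^a |V - \ell(x)|^2\,dt\,dX \le C\lambda^4$ (the normalizing powers $n+2$ and $n+3+a$ are exactly the measures of $Q_\lambda$ and of $Q^*_\lambda$ with weight $y^a$ up to constants).

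Next I would combine this with the closeness estimate. Writing $U - \ell(x) = (U - V) + (V - \ell(x))$ and using $(p+q)^2 \le 2p^2 + 2q^2$,
\begin{align*}
\frac{1}{\lambda^{n+2}}\int_{Q_\lambda}|U - \ell|^2 + \frac{1}{\lambda^{n+3+a}}\int_{Q^*_\lambda} y^a|U-\ell|^2 \le \frac{C\epsilon^2}{\lambda^{n+3+a}} + C\lambda^4,
\end{align*}
where $\epsilon$ is the closeness parameter from Lemma \ref{closeness}. Now first choose $\lambda$ small enough that $C\lambda^4 < \tfrac12 \lambda^3$, i.e. $\lambda < (2C)^{-1}$; then, with $\lambda$ fixed, choose $\epsilon$ (and correspondingly $\delta = \delta(\epsilon,n,s,\text{ellipticity})$ from Lemma \ref{closeness}) small enough that $C\epsilon^2/\lambda^{n+3+a} < \tfrac12\lambda^3$. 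This yields the claimed bound $< \lambda^3$ and finishes the proof.

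The main obstacle I anticipate is making the passage from Lemma \ref{reg} to the pointwise Taylor estimate fully rigorous: Lemma \ref{reg}(i) gives derivative bounds only on a half-cube $Q_{r/2} \times [0,r/2)$ sitting strictly inside the domain $Q^*_{1/2}$ where $V$ solves the equation, so one must be careful that $\lambda$ is small relative to the domain of validity, and one must correctly match the anisotropic parabolic scaling ($t \sim |x|^2 \sim y^2$) when expanding $V$ near the origin. A secondary point requiring care is that the closeness in Lemma \ref{closeness} is stated on $Q^*_{1/2}$ and $Q_{1/2}$ while $V$ there solves the equation on $(-3/4,3/4)\times B^*_{1/2}$; since $\lambda < 1/2$ this causes no real difficulty, but the normalization of $\|V\|_{L^2_a}$ used to feed Lemma \ref{reg} should be tracked explicitly via the triangle inequality $\|V\|_{L^2_a(Q^*_{1/2})} \le \|U\|_{L^2_a(Q^*_{1/2})} + \|U - V\|_{L^2_a(Q^*_{1/2})} \le 1 + \epsilon$.
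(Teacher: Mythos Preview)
Your proposal is correct and follows essentially the same approach as the paper: define $\ell$ as the first-order Taylor polynomial of $V(0,\cdot,0)$ at the origin, use Lemma~\ref{reg} to get $|V-\ell|\le C(|t|+|X|^2)$ on $Q^*_\lambda$, split $U-\ell=(U-V)+(V-\ell)$, and then choose $\lambda$ first and $\epsilon$ (hence $\delta$) second. The paper carries out exactly this argument, including the normalization $\|V\|_{L^2_a(Q^*_{1/2})}^2\le 2\epsilon^2+2\le 4$ that you flagged.
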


\begin{proof}
	Let $0<\epsilon <1$ be any real number. From Lemma \ref{closeness}, there exists a $\delta(\epsilon) >0$ such that if (\ref{delta}) holds, then 
	\begin{align}\label{clo}
		\int_{Q^*_{1/2}}y^a|U-V|^2 dt dX < \epsilon^2 \hspace{2mm} \text{and} \hspace{2mm} \int_{Q_{1/2}}|U-V|^2 dt dx < \epsilon^2
	\end{align}
	 where $V$ solves (\ref{Vproblem}). 
	 	Now  by Lemma \ref{reg}, $V$ is smooth, we define $$\ell(x) = V(0,0,0) + \nabla_x V(0,0,0) \cdot x=\mathcal{A}+\mathcal{B}\cdot x.$$
	 	Also by Lemma \ref{reg}, there exists a constant $\tilde{C}=\tilde{C}(n,s)$ such that 
	 	$$ |V(0,0,0)| + |\nabla_x V(0,0,0)| \leq \tilde{C} ||V||_{L^2_a(Q_{1/2} \times [0,1/2])}.$$
	 Now, using triangle inequality, $(a+b)^2 \leq 2a^2 +2b^2$, (\ref{clo}) and (\ref{l2u}), we get
	$$\int_{Q^*_{1/2}} y^a|V|^2  \, dt \, dX \leq 2\int_{Q^*_{1/2}}y^a|U-V|^2 dt dX +2\int_{Q^*_{1/2}} y^a|U|^2  \, dt \, dX \leq 2\epsilon^2+2 \leq 4.$$
	Hence we get
	$$ |V(0,0,0)| + |\nabla_x V(0,0,0)| \leq C, $$
	where $C=4 \tilde{C}.$\\
	Using triangle inequality, we get
	\begin{align*}
		\frac{1}{\lambda^{n+2}} &\int_{Q_{\lambda}} |U(t,x,0)-\ell(x)|^2\, dt \, dx + \frac{1}{\lambda^{n+3+a}} \int_{Q^*_{\lambda}} y^a|U-\ell(x)|^2 \, dt \, dX\\
		\leq & \frac{2}{\lambda^{n+3+a}} \int_{Q^*_{\lambda}} y^a|U-V|^2  \, dt \, dX + \frac{2}{\lambda^{n+3+a}} \int_{Q^*_{\lambda}} y^a|V-\ell(x)|^2  \, dt \, dX\\
		&+ \frac{2}{\lambda^{n+2}} \int_{Q_{\lambda}} |U-V|^2  \, dt \, dx + \frac{2}{\lambda^{n+2}} \int_{Q_{\lambda}} |V-\ell(x)|^2  \, dt \, dx.
	\end{align*}
	 For any $(t,x,y) \in Q^*_{1/4}$, using Mean Value theorem, Taylor's Theorem and Lemma \ref{reg}, we have a universal constant $D$ such that
	\begin{align*}
		|V(t,x,y)-\ell(x)| &\leq |V(t,x,y)-V(t,x,0)| + |V(t,x,0)-V(0,x,0) | \\ 
		&\quad+| V(0,x,0) - V(0,0,0) - \nabla_x V(0,0,0) \cdot x| \\ 
		&\leq |V_y(t,x,\xi_1)|y +|V_t(\xi_2,x,0)| t+D|x|^2 \hspace{2mm} \text{(for some $\xi_1 \in (0,y)$ and $\xi_2 \in (0,t)$)} \\
		&\leq D\xi y + Dt+D|x|^2\leq D(|X|^2 + t).
	\end{align*}
 Hence, using (\ref{clo}) we get
	\begin{align*}
			&\frac{1}{\lambda^{n+2}} \int_{Q_{\lambda}} |U(t,x,0)-\ell(x)|^2\, dt \, dx+\frac{1}{\lambda^{n+3+a}} \int_{Q^*_{\lambda}} y^a|U-\ell(x)|^2  \, dt \, dX \\ 
		&\leq   \frac{2 \epsilon^2 }{\lambda^{n+3+a}} + \frac{4 D^2 }{\lambda^{n+3+a}} \int_{Q^*_{\lambda}}y^a (|X|^4+ |t|^2) \, dt \, dX + \frac{2 \epsilon^2 }{\lambda^{n+2}} + \frac{4 D^2 }{\lambda^{n+2}} \int_{Q_{\lambda}} (|x|^4+ |t|^2) \, dt \, dx\\ 
		&\leq \frac{2 \epsilon^2 }{\lambda^{n+3+a}} + \frac{8D^2}{1+a}\lambda^4 +\frac{2 \epsilon^2 }{\lambda^{n+2}} +8D^2 \lambda^4.
	\end{align*}
	Now first choose $0<\lambda <1/4 $ small enough such that 
	\begin{align*}
		 \frac{8D^2}{1+a}\lambda^4 +8D^2 \lambda^4 \leq \frac{\lambda^3}{2},
	\end{align*}
    then take $\epsilon$ such that
    $$\epsilon^2=\frac{\lambda^{n+6+a}}{8},$$
    which in turn fixes $\delta.$
\end{proof}
Before moving ahead we would like to give some real analysis estimates. Following estimate is analogous to [\cite{DKM}, Equation (3.4)] \\
\textbf{Estimate 1:}
For given $0<\sigma<1$, define
$$r_i:=\frac{\sigma^i r}{2}$$
for $i=0,1,2,3,...$. We have 
\begin{align}\label{est1}
	\sum_{i=0}^{\infty}&r_i^{2s-1} \Big(\fint_{Q_{r_i}(x_0,t_0)}  |f(x,t)|^2 dx dt\Big)^{\frac{1}{2}}\leq c \int_{0}^{r}\rho^{2s-2}\Big(\fint_{Q_\rho(x_0,t_0)}  |f(x,t)|^2 dx dt\Big)^{\frac{1}{2}} d \rho
\end{align}
for some $c$ depends on $s$, $\sigma$ and $n$. 
\begin{proof}  In this proof we will denote $Q_{r_i}(x_0,t_0)$ by $Q_i$. We will assume $|Q_1|=1.$
\begin{align*}
	&\hspace{2mm}\sum_{i=0}^{\infty}r_i^{2s-1} \left(\fint_{Q_i} |f(x,t)|^2 dx dt\right)^{\frac{1}{2}}\\
	&=r_0^{2s-1}\left(\fint_{Q_0}  |f(x,t)|^2 dx dt\right)^{\frac{1}{2}}+\sum_{i=1}^{\infty}r_i^{2s-1}\left(\fint_{Q_i}  |f(x,t)|^2 dx dt\right)^{\frac{1}{2}}\\
	& =\frac{2s - 1}{2^{2s-1}-1}\int_{r/2}^{r}\rho^{2s-2} d\rho \left(\fint_{Q_0}  |f(x,t)|^2 dxdt\right)^{\frac{1}{2}}\\
	& \hspace{2mm}+ \sum_{i=1}^{\infty}\frac{\sigma^{2s -1} \big(2s - 1\big)}{1 -\sigma^{1-2s}} \int_{\sigma^i r/2}^{\sigma^{i-1} r/2}\rho^{2s-2} d\rho \left(\fint_{Q_i}  |f(x,t)|^2 dx dt\right)^{\frac{1}{2}}\\
	&=\frac{2s - 1}{2^{2s-1}-1}\int_{r/2}^{r}\rho^{2s-2} d\rho \frac{1}{( r/2)^{(n+2)/2}}\left(\int_{Q_0}  |f(x,t)|^2 dx  dt\right)^{\frac{1}{2}}\\ & \hspace{2mm}+  \sum_{i=1}^{\infty}\frac{\sigma^{2s -1} \left(2s - 1\right)}{1 -\sigma^{1-2s}}  \int_{\sigma^i r/2}^{\sigma^{i-1} r/2}\rho^{2s-2} d\rho \frac{1}{(\sigma^i r/2)^{(n+2)/2}}\left(\int_{Q_i}  |f(x,t)|^2 dx dt\right)^{\frac{1}{2}}\\
	& \leq  \frac{(2s - 1)2^{\frac{(n+2)}{2}}}{\left(2^{2s-1}-1\right)}\int_{r/2}^{r}\rho^{2s-2}  \frac{1}{ \rho^{(n+2)/2}}\left(\int_{Q_\rho}  |f(x,t)|^2 dx dt\right)^{\frac{1}{2}} d\rho \\
	&+  \sum_{i=1}^{\infty}\frac{\sigma^{2s -1} \left(2s - 1\right)\sigma^{\frac{(n+2)}{2}}}{\left(1 -\sigma^{1-2s}\right) }  \int_{\sigma^i r/2}^{\sigma^{i-1} r/2}\rho^{2s-2}\frac{1}{\rho^{(n+2)/2}}\left(\int_{Q_\rho}  |f(x,t)|^2 dx dt\right)^{\frac{1}{2}} d \rho\\
	&\leq  \left(\frac{(2s - 1)2^{\frac{(n+2)}{2}}}{\left(2^{2s-1}-1\right)} +\frac{\sigma^{2s -1} \left(2s - 1\right)\sigma^{\frac{(n+2)}{2}}}{\left(1 -\sigma^{1-2s}\right) } \right) 
	\int_{0}^{r}\rho^{2s-2}\left(\fint_{Q_\rho}  |f(x,t)|^2 dx dt\right)^{\frac{1}{2}} d \rho.
\end{align*}\end{proof}
\textbf{Estimate 2:} We have the following estimate which is analogue of [\cite{DKM}, Equation (3.13)],
\begin{align}\label{est2}
	\sup_{(x,t)\in Q_{1/2}}\tilde{\mathbf{I}}_{2}^{f}((x,t),r)\;\leq \;\frac{1}{(n + 2)\;C_{n + 2}^{(2s - 1)/(n + 2)}}\int_{0}^{C_{n + 2} r^{n + 2} }\;\rho^{\frac{2s - 1}{n + 2}}  (g^{**}(\rho))^{\frac{1}{2}}\frac{d\rho}{\rho},
\end{align}
where $0<r<1/2$,  $\tilde{\mathbf{I}}_{2}^{f}((x,t),r)=\int_{0}^{r}\rho^{2s-2}\Big(\fint_{Q_\rho(x_0,t_0)}  |f(x,t)|^2 dx dt\Big)^{\frac{1}{2}} d \rho$, $g=|f|^2$ and $C_{n+2}$ denotes measure of $Q_1$ in $\R^{n+1}.$
\begin{proof}
	From Hardy-Littlewood inequality \cite{HL}, we have
	\begin{align}\label{171}
		\fint_{Q_{\rho}(x_0,t_0)} g(t,x)dtdx \;\leq \;\frac{1}{C_{n + 2} \rho^{n + 2}} \int_{0}^{C_{n + 2} \rho^{n + 2}} g^*(s)ds\;\leq \;g^{**}(C_{n + 2} \rho^{n + 2}).
	\end{align}
	Multiply \eqref{171} with $\rho^{2s - 2}$ and integrate with respect to $\rho$ in $(0,r)$ to get
	\begin{equation*}
		\tilde{\mathbf{I}}_{2}^{f}((x_0,t_0),r)\;\leq \;\int_{0}^{r}\;\rho^{2s - 2}(g^{**}(C_{n + 2}\rho^{n + 2}))^{\frac{1}{2}}\;d\rho.
	\end{equation*}
Now by changing variables we get (\ref{est2}).	
\end{proof}

Now we will define some functions as in \cite{Ak}:
\begin{itemize}
	\item Define
\begin{align*}
	\tilde{\omega}_1(r):=\text{max}\{\omega_{A}(\gamma r)/\tilde{\delta},r\},
\end{align*}
 where $\gamma$ and $\tilde{\delta}$ will be fixed later. By [\cite{L} Theorem 8] we can assume $\omega_1$ is concave. Without loss of generality we can assume ${\omega}_1(1)=1$. Finally we define
\begin{align*}
	\omega_1(r):=\tilde{\omega}_1(\sqrt{r}).
\end{align*}
Then, $\omega_1(r)$ is $1/2$-decreasing function. Also, by application of change of variables we have $\omega_1(r)$ is Dini continuous.
\item We define
\begin{align*}
	\omega_2(r):=max\left\{\gamma I(\gamma r)/\tilde{\delta},r\right\},
\end{align*}
where $	I(r):= r^{2s-1}\bigg(\fint_{Q_r}  |f(x,t)|^2 dt dx\bigg)^{1/2}.$
\item We define
\begin{align*}
	\omega_3(\lambda^k):=\sum_{i=0}^{k}	\omega_1(\lambda^{k-i})	\omega_2(\lambda^i),
\end{align*}
where $\lambda$ is from Lemma \ref{linear}.
\item Finally, we define
\begin{align*}
	\omega(\lambda^k)=max\left\{\omega_3(\lambda^k),\lambda^{k/2}\right\}.
\end{align*}
\end{itemize}
\begin{lemma}
There exists a universal constant $C_{sum}$ such that
\begin{align*}
		\sum_{i=0}^{\infty}\omega(\lambda^i) \le C_{sum}.
\end{align*}
\end{lemma}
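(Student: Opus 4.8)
The plan is to show that $\sum_i \omega(\lambda^i)$ is controlled by separately bounding $\sum_i \omega_3(\lambda^i)$ and the geometric tail $\sum_i \lambda^{i/2}$, since $\omega(\lambda^i) \le \omega_3(\lambda^i) + \lambda^{i/2}$. The second sum is immediately $\frac{1}{1-\sqrt{\lambda}}$, a universal constant because $\lambda$ from Lemma \ref{linear} is universal. So the whole point is the first sum.

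For $\sum_{k=0}^\infty \omega_3(\lambda^k) = \sum_{k=0}^\infty \sum_{i=0}^k \omega_1(\lambda^{k-i})\omega_2(\lambda^i)$, I would interchange the order of summation and recognize this as a discrete convolution: setting $j = k-i$, the double sum equals $\sum_{i=0}^\infty \sum_{j=0}^\infty \omega_1(\lambda^j)\omega_2(\lambda^i) = \left(\sum_{j=0}^\infty \omega_1(\lambda^j)\right)\left(\sum_{i=0}^\infty \omega_2(\lambda^i)\right)$. Thus it suffices to show each factor is a universal constant. For $\sum_j \omega_1(\lambda^j)$: by construction $\omega_1(r) = \tilde\omega_1(\sqrt r)$ with $\tilde\omega_1(r) = \max\{\omega_A(\gamma r)/\tilde\delta, r\}$, so $\omega_1(\lambda^j) \le \omega_A(\gamma\lambda^{j/2})/\tilde\delta + \lambda^{j/2}$. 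The geometric part sums to a constant, and $\sum_j \omega_A(\gamma\lambda^{j/2})/\tilde\delta$ is comparable (up to the constant $1/\tilde\delta$ and a dyadic-comparison factor depending only on $\lambda$) to the Dini integral $\int_0^1 \frac{\omega_A(t)}{t}\,dt$, which is finite by hypothesis; this is precisely the standard comparison between a Dini sum $\sum_j \omega_A(c\,\rho^j)$ and $\int_0^{c} \frac{\omega_A(t)}{t}\,dt$, valid because $\omega_A$ is nondecreasing. For $\sum_i \omega_2(\lambda^i)$: similarly $\omega_2(\lambda^i) \le \gamma I(\gamma\lambda^i)/\tilde\delta + \lambda^i$, and $\sum_i I(\gamma\lambda^i)$ with $I(r) = r^{2s-1}\big(\fint_{Q_r}|f|^2\big)^{1/2}$ is controlled, via Estimate 1 (inequality \eqref{est1}), by $c\int_0^{\gamma} \rho^{2s-2}\big(\fint_{Q_\rho}|f|^2\big)^{1/2}\,d\rho$, and then via Estimate 2 (inequality \eqref{est2}) by a constant times $\int_0^{C_{n+2}}\rho^{\frac{2s-1}{n+2}}(g^{**}(\rho))^{1/2}\frac{d\rho}{\rho}$ with $g = |f|^2$; this last integral is finite and bounded by a multiple of the $L\big(\frac{n+2}{2s-1},1\big)$ norm of $f$, by the very definition of the Lorentz quasinorm (the substitution $\rho \mapsto \rho^{(n+2)/(2s-1)}$ turns it into $\int_0^\infty g^{**}(\sigma^{(n+2)/(2s-1)})^{1/2} \cdots$, i.e. the $L(\frac{n+2}{2s-1},1)$ norm of $f$ up to dimensional constants).

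Putting these together: $\sum_i \omega(\lambda^i) \le \left(\sum_j \omega_1(\lambda^j)\right)\left(\sum_i \omega_2(\lambda^i)\right) + \frac{1}{1-\sqrt\lambda} =: C_{sum}$, and every quantity entering $C_{sum}$ — $\lambda$, the dyadic comparison constants, $\tilde\delta$, $\gamma$, the Dini integral $\int_0^1 \omega_A(t)/t\,dt$, and $\|f\|_{L(\frac{n+2}{2s-1},1)}$ — is universal in the sense of the Remark after \eqref{weaksoldef}.

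The main obstacle I anticipate is bookkeeping the change-of-variables chain that identifies $\int_0^{r}\rho^{2s-2}\big(\fint_{Q_\rho}|f|^2\big)^{1/2}d\rho$ with a multiple of the Lorentz norm of $f$: one must be careful that the exponent $2s-1 \in (0,1)$ makes $\rho^{2s-2}$ non-integrable near $0$ on its own, so the decay must come entirely from the averaged $L^2$ norm of $f$ shrinking, and the clean statement is exactly Estimate 2 combined with the definition $\int_0^\infty |\{|f| > t\}|^{(2s-1)/(n+2)}\,dt < \infty$. A secondary technical point is justifying that $\omega_1$ can be taken concave and that the dyadic sum $\sum \omega_A(\gamma\lambda^{j/2})$ genuinely compares to the Dini integral even though the argument decreases like $\lambda^{j/2}$ rather than $\lambda^j$ — but this only changes the comparison constant by a factor depending on $\lambda$, hence still universal. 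Everything else is routine summation of geometric series and Fubini.
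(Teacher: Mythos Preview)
Your proposal is correct and follows essentially the same route as the paper: split $\omega(\lambda^i)\le\omega_3(\lambda^i)+\lambda^{i/2}$, factor the convolution $\sum_k\omega_3(\lambda^k)=\bigl(\sum_j\omega_1(\lambda^j)\bigr)\bigl(\sum_i\omega_2(\lambda^i)\bigr)$, bound the $\omega_1$-sum via the Dini integral of $\omega_A$, and bound the $\omega_2$-sum via Estimates~1 and~2. The only cosmetic difference is in the final Lorentz step: where you argue directly from the definition of the $L\bigl(\tfrac{n+2}{2s-1},1\bigr)$ quasinorm, the paper instead observes that $g=|f|^2\in L\bigl(\tfrac{n+2}{2(2s-1)},\tfrac12\bigr)$ and invokes O'Neil's inequality~[\cite{ON}, (6.8)] to conclude $\int_0^\infty \rho^{\frac{2s-1}{n+2}}(g^{**}(\rho))^{1/2}\,\tfrac{d\rho}{\rho}<\infty$; this makes the ``bookkeeping'' you flagged as a potential obstacle entirely routine.
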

\begin{proof}	
Since $\omega_1$ is a increasing function, we get 
\begin{align*}
	\sum_{i=0}^{\infty}\omega(\lambda^i) \le \sum_{i=0}^{\infty}\omega_3(\lambda^i)+\frac{1}{1-\sqrt{\lambda}}\le\sum_{i=0}^{\infty}\omega_1(\lambda^i)\sum_{i=0}^{\infty}\omega_2(\lambda^i)+\frac{1}{1-\sqrt{\lambda}}.
\end{align*}
Using the Dini continuity, increasing nature of $\omega_A$ and change of variables, we have
\begin{align*}
	\sum_{i=0}^{\infty}\omega_1(\lambda^i) \le \sum_{i=0}^{\infty}\frac{\omega_A(\gamma\lambda^{i/2})}{\tilde{\delta}}+\frac{1}{1-\sqrt{\lambda}} \le \frac{\omega_A(\gamma)}{\tilde{\delta}} + \frac{1}{(-\log \sqrt{\lambda})\tilde{\delta} }\int_{0}^{\gamma}\frac{\omega_A(s)}{s}ds.
\end{align*}
Choose $\gamma < \tilde{\delta}$ such that 
\begin{align*}
		\sum_{i=0}^{\infty}\omega_1(\lambda^i) \le 1.
\end{align*}
Note that from (\ref{est1}), we have
\begin{align*}
	\sum_{i=0}^{\infty}\omega_2(\lambda^i) \le \frac{c\gamma}{\tilde{\delta}}\tilde{\mathbf{I}}_{2}^{f}((0,0),\gamma)+\frac{1}{1-\lambda}.
\end{align*}
Using (\ref{est2}) we have
\begin{align*}
	\sum_{i=0}^{\infty}\omega_2(\lambda^i) \le \frac{c\gamma}{\tilde{\delta}(n + 2)\;C_{n + 2}^{(2s - 1)/(n + 2)}}\int_{0}^{C_{n + 2} \gamma^{n + 2} }\rho^{\frac{2s - 1}{n + 2}}  (g^{**}(\rho))^{\frac{1}{2}}\frac{d\rho}{\rho},
\end{align*}
where $g=f^2$. We have given that $f \in L\big(\frac{n+2}{2s-1},1\big)$, which gives us $g \in L\big(\frac{n+2}{2(2s-1)},\frac{1}{2}\big)$. Since $2(2s-1) < n+2$, therefore from [\cite{ON}, Equation (6.8)], we get
\begin{align*}
	\int_{0}^{\infty} \rho^{\frac{2s - 1}{n + 2}} (g^{**}(\rho))^{\frac{1}{2}}\frac{d\rho}{\rho} < \infty.
\end{align*}
Hence we get  a universal bound on $	\sum_{i=0}^{\infty}\omega_2(\lambda^i).$ This completes the proof of the lemma.
\end{proof}
 Note that by replacing $U(t,X)$ by 
 $$U_{\gamma}(t,X)=\left(\fint_{Q_\gamma}U(t,x,0)^2dtdx+\fint_{Q^*_\gamma}U(t,X)^2dtdX+1\right)^{-1} U(\gamma^2t,\gamma X),$$
 we can assume 
 $$\fint_{Q_1}U(t,x,0)^2dtdx+\fint_{Q^*_1}U(t,X)^2dtdX \le 1,$$
 modulus of continuity for $A$ is $\omega_A(\gamma r),$ and $r^{2s-1}\bigg(\fint_{Q_r}  |f(x,t)|^2 dt dx\bigg)^{1/2}\le \gamma I(\gamma r)$. In the following lemma we shall denote $U_{\gamma}(t,X)$ by $U(t,X)$, $\omega_A(\gamma r)$ by $\omega_A(r),$ and $\gamma I(\gamma r)$ by $I(r)$.
  \begin{lemma}\label{lseq}
	There exist a sequence of linear functions $\ell_{k}(x) =a_k + b_k \cdot x$ and a constant $C=C(n,s)>0$
	such that for any solution $U$ of (\ref{mainpr}) satisfies
	\begin{align}\label{ind}
		\frac{1}{\lambda^{k(n+2)}} \int_{Q_{\lambda^k}} |U(t,x,0)-\ell_{k}(x)|^2 \, dt \, dx + \frac{1}{\lambda^{k(n+3+a)}} \int_{Q^*_{\lambda^k}} y^a|U-\ell_{k}(x)|^2 \, dt \, dX < \lambda^{2k}{\omega}^2(\lambda^k),
	\end{align}
\begin{align}\label{akk1}
   |a_{k+1}-a_k| \leq C \lambda^k \omega(\lambda^k) \hspace{2mm} \text{and} \hspace{2mm} |b_{k+1}-b_k| \leq C \omega(\lambda^k).
\end{align}
	
\end{lemma}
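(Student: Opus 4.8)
The plan is to prove Lemma \ref{lseq} by induction on $k$, with the base case $k=1$ being essentially Lemma \ref{linear} (after the normalizations absorbed into $U_\gamma$, $\omega_A$, and $I$, one has $\ell_1 = \ell$ from that lemma, and $\lambda^2\omega^2(\lambda) \ge \lambda^2\cdot\lambda = \lambda^3$ by the definition $\omega(\lambda^k)\ge\lambda^{k/2}$, so \eqref{ind} for $k=1$ follows; set $\ell_0 \equiv 0$ or $a_0=b_0=0$ so that $|a_1-a_0|+|b_1-b_0|\le C$ gives \eqref{akk1} for $k=0$). For the inductive step, assume \eqref{ind} holds at stage $k$ with linear function $\ell_k(x)=a_k+b_k\cdot x$. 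First I would rescale: define
\begin{align*}
	\tilde U(t,X) := \frac{U(\lambda^{2k}t,\lambda^k X) - \ell_k(\lambda^k x)}{\lambda^k\omega(\lambda^k)},
\end{align*}
which by \eqref{ind} satisfies the smallness hypothesis \eqref{l2u} on $Q_1$ (after adjusting the $L^2$ normalization by the universal constant, which can be folded into $C$). One must then check that $\tilde U$ solves a problem of the form \eqref{mainpr} with a new matrix $\tilde A(x) = A(\lambda^k x)$, a new right-hand side $\tilde f$ and a new vector field $\tilde F$: here the key points are that subtracting the linear function $\ell_k(x)$ introduces a divergence-form term $\operatorname{div}(y^a(B-I)b_k)$ (since $\ell_k$ is harmless for $\partial_t$ and for the $y$-Laplacian, but $\operatorname{div}(y^aB\nabla\ell_k) = \operatorname{div}(y^a(B-I)b_k)$ after using that $\operatorname{div}(y^a b_k)=0$), contributing $(B-I)b_k$ to the effective $F$, and that the Neumann data rescales correctly. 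The resulting smallness parameters are controlled by $\omega_A(\lambda^k)/\omega(\lambda^k) \lesssim \tilde\delta$ (from $\omega \ge \omega_1 \ge \omega_A(\gamma\cdot)/\tilde\delta$ at appropriate scales, using concavity/monotonicity of $\omega_1$) and $I(\lambda^k)/\omega(\lambda^k)\lesssim \tilde\delta$ (from $\omega \ge \omega_2$), so by choosing $\tilde\delta$ small we can invoke Lemma \ref{linear} applied to $\tilde U$.

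Lemma \ref{linear} then furnishes a linear function $\tilde\ell(x) = \mathcal A + \mathcal B\cdot x$ with $|\mathcal A|+|\mathcal B|\le C(n,s)$ and
\begin{align*}
	\frac{1}{\lambda^{n+2}}\int_{Q_\lambda}|\tilde U(t,x,0)-\tilde\ell(x)|^2\,dt\,dx + \frac{1}{\lambda^{n+3+a}}\int_{Q^*_\lambda}y^a|\tilde U - \tilde\ell(x)|^2\,dt\,dX < \lambda^3.
\end{align*}
Undoing the rescaling, set $\ell_{k+1}(x) := \ell_k(x) + \lambda^k\omega(\lambda^k)\,\tilde\ell(\lambda^{-k}x)$, i.e. $a_{k+1} = a_k + \lambda^k\omega(\lambda^k)\mathcal A$ and $b_{k+1} = b_k + \omega(\lambda^k)\mathcal B$. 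The bound $|\mathcal A|+|\mathcal B|\le C$ immediately gives \eqref{akk1}. Substituting back and using that $Q_{\lambda^{k+1}}$ corresponds to $Q_\lambda$ under the rescaling, a change of variables turns the displayed inequality for $\tilde U$ into
\begin{align*}
	\frac{1}{\lambda^{(k+1)(n+2)}}\int_{Q_{\lambda^{k+1}}}|U(t,x,0)-\ell_{k+1}(x)|^2 + \frac{1}{\lambda^{(k+1)(n+3+a)}}\int_{Q^*_{\lambda^{k+1}}}y^a|U-\ell_{k+1}(x)|^2 < \lambda^{2k}\omega^2(\lambda^k)\cdot\lambda^3 = \lambda^{2(k+1)}\cdot\lambda\cdot\omega^2(\lambda^k).
\end{align*}
To close the induction I need $\lambda\,\omega^2(\lambda^k) \le \omega^2(\lambda^{k+1})$, which is the crucial structural fact and must be read off from the definition of $\omega$: since $\omega(\lambda^{k+1})\ge\lambda^{(k+1)/2}$ handles the "$\lambda^{k/2}$" branch, and for the $\omega_3$ branch one uses that $\omega_3(\lambda^{k+1}) = \sum_{i=0}^{k+1}\omega_1(\lambda^{k+1-i})\omega_2(\lambda^i) \ge \omega_1(\lambda^0)\omega_2(\lambda^{k+1}) + \sum_{i=0}^{k}\omega_1(\lambda^{k+1-i})\omega_2(\lambda^i)$ together with monotonicity of $\omega_1$ and $\omega_2$ and the normalization $\omega_1(1)=1$ — this gives a recursive lower bound on $\omega_3(\lambda^{k+1})$ in terms of $\omega_3(\lambda^k)$ of exactly the right form. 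Assembling these yields \eqref{ind} at stage $k+1$.

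The main obstacle I anticipate is twofold. First, verifying that the rescaled function $\tilde U$ genuinely satisfies a Neumann problem of the form \eqref{mainpr} with right-hand side data whose smallness is controlled by $\tilde\delta$: one must carefully track how subtracting $\ell_k$ produces the extra flux term $(B-I)b_k$ in $F$, confirm $F_{n+1}=0$ is preserved (it is, since $\ell_k$ has no $y$-dependence and $B$ is block-diagonal), check that $|b_k|$ stays universally bounded via the telescoping $|b_k|\le\sum_{j<k}|b_{j+1}-b_j|\le C\sum_j\omega(\lambda^j)\le CC_{sum}$ using the previous lemma, and finally verify the Neumann datum $\tilde f$ has $L^2$ norm $\lesssim I(\lambda^k)/\omega(\lambda^k)\cdot(\text{universal})\lesssim\tilde\delta$. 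Second, and more delicate, is arranging the monotonicity/sub-multiplicativity chain $\lambda\,\omega^2(\lambda^k)\le\omega^2(\lambda^{k+1})$ to hold \emph{simultaneously} with all the smallness requirements on $\tilde\delta$ and $\gamma$ coming from Lemma \ref{linear} and from the summability lemma; this requires choosing the parameters in the correct order ($\lambda$ first from Lemma \ref{linear}, then $\tilde\delta$ small enough for the compactness step, then $\gamma<\tilde\delta$ to make the $\omega_1$ sum converge), and checking no circularity arises. The remaining computations — the changes of variables in the integrals, the triangle-inequality splittings — are routine.
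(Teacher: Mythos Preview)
Your proposal is correct and follows essentially the same route as the paper: induction on $k$ with $\ell_0\equiv 0$, the rescaled function $\tilde U$, verification that the rescaled data $\tilde A,\tilde f,\tilde F$ satisfy \eqref{delta} (using $|b_k|\le C\,C_{sum}$ from the inductive bounds and the summability lemma), application of Lemma~\ref{linear}, and undoing the rescaling to define $\ell_{k+1}$. You are in fact more explicit than the paper about the closing inequality $\lambda\,\omega^2(\lambda^k)\le\omega^2(\lambda^{k+1})$, which the paper's proof silently absorbs; your instinct that this is the delicate point is correct, but note that the needed comparison $\omega_1(\lambda^{k+1-i})\ge\sqrt{\lambda}\,\omega_1(\lambda^{k-i})$ comes from the \emph{$1/2$-decreasing} property of $\omega_1$ (i.e.\ $r\mapsto\omega_1(r)/\sqrt{r}$ nonincreasing), not mere monotonicity as you wrote.
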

\begin{proof}
	We will prove the lemma by induction on $k$. For $k=0$, take $\ell_{k}(x)=0$ and we get 
		$$ \int_{Q_1} |U(t,x,0)|^2 \, dt \, dx + \int_{Q^*_1} y^a|U|^2 \, dt \, dX \leq 1 \leq {\omega}^2(1). $$ Thus we are done for $k=0$.
		
		Let us assume result is true for $k=0,1,...,i$. We will prove it for $k=i+1$. Define
		$$\tilde{U}(t,X):=\frac{U(\lambda^{2i}t,\lambda^i X)-\ell_{i}(\lambda^i x)}{\lambda^{i}{\omega}(\lambda^i)},$$
	    then $\tilde{U}$ is a weak solution to 
		$$\begin{cases}
			y^a \partial_t \tilde{U} - \text{div}(y^a \tilde{B} \nabla \tilde{U}) = - \text{div}(y^aF )&\hbox{in}~Q^*_1 \\
			-y^a\tilde{U}_y|_{y=0}=\tilde{f} &\hbox{on}~Q_1,
		\end{cases}$$
		where $\tilde{B}(X)=B(\lambda^i X)$, $\tilde{f}(t,x)=\frac{\lambda^{-ia}}{{\omega}(\lambda^i)}f(\lambda^{2i} t, \lambda^i x)$ and the vector field $F$ is given by ${\lambda^{i}{\omega}(\lambda^i)}F= ((I-A(\lambda^i x))\nabla_x \ell(\lambda^i x), 0 )$. Now we estimate the following:\\
		\textbf{Bound for $f$}:
		\begin{align*}
			\int_{Q_1} \tilde{f}^2 dt dx &=\frac{\lambda^{-2ia}}{{\omega}^2(\lambda^i)} \int_{Q_1}f^{2}(\lambda^{2i} t, \lambda^i x)  dt dx= \frac{\lambda^{-2ia}}{{\omega}^2(\lambda^i)} \fint_{Q_{\lambda^i}}f^{2}( t, x)  dt dx\\
			&\le\frac{{\omega_2^2}(\lambda^{i})}{{\omega}^2(\lambda^i)}
			\leq {\tilde{\delta}}^2\frac{{\omega_2^2}(\lambda^{i})}{\omega_1^2(1){\omega_2}^2(\lambda^i)}={\tilde{\delta}}^2.
			%\leq \delta^2, \hspace{2mm} \int_{Q_1^*} y^a |F|^2 dt dX \leq \delta^2,\hspace{1mm} \text{and} \hspace{1mm} \int_{B_1} |A(x)-I|^2 dx \leq \delta^2
		\end{align*}
	\textbf{Bound for $F$:} Note that by induction hypothesis\\ $$|b_i| \le \sum_{j=1}^{i}|b_j-b_{j-1}| \le \sum_{j=0}^{i-1}C \omega(\lambda^i) \le CC_{sum}=:C_1.  $$ Now
	\begin{align*}
		\int_{Q_1^*} y^a |F|^2 dtdX
		 &=\lambda^{-2i}{\omega}^{-2}(\lambda^i)\int_{Q_1^*}y^a|(I-A(\lambda^i x))\nabla_x \ell_i(\lambda^i x)|^2 dtdX \\
		 &=2{\omega}^{-2}(\lambda^i)\fint_{B_{\lambda^i}}|(I-A(x))b_i|^2 dtdX\\
		 &\leq 2{\omega}^{-2}(\lambda^i)\omega_{A}^2(\lambda^i)C_1^2\\
		 &\leq 2C^2_1\frac{\omega_{A}^2(\lambda^i)}{\omega_{1}^2(\lambda^i)\omega_2(1)}
		 \le 2C^2_1\tilde{\delta}^2\frac{\omega_{A}^2(\lambda^i)}{\omega_{A}^2(\lambda^i)}=2C^2_1\tilde{\delta}^2.
	\end{align*}
\textbf{	Bound for $\tilde{B}$:}
		\begin{align*}
			|B(\lambda^i x)-I|^2 \leq \omega_A^2(\lambda^i) \leq \omega_A^2(1) \le \tilde{\delta}^2. 
		\end{align*} 
	Choose $\tilde{\delta}<\delta$ such that $2C^2_1\tilde{\delta}^2<\delta^2$.
	We rewrite (\ref{ind}) for $k=i$ as 
	\begin{align*}
		\frac{1}{\lambda^{i(n+2)}} \int_{Q_{\lambda^i}} \frac{|U(t,x,0)-\ell_{i}(x)|^2}{\lambda^{2i}{\omega}^2(\lambda^i)} \, dt \, dx + \frac{1}{\lambda^{i(n+3+a)}} \int_{Q^*_{\lambda^i}} y^a\frac{|U(t,x,y)-\ell_{i}(x)|^2}{\lambda^{2i}{\omega}^2(\lambda^i)} \, dt \, dX <1.
	\end{align*}
	Now after changing the variables, we get	
	$$\int_{Q_{1}}|\tilde{U}(t,x,0)|^2 dt dx +\int_{Q^*_{1}}y^a|\tilde{U}(t,x,y)|^2 dt dX \leq 1.$$
	By Lemma \ref{linear} there exists a $l(x)=a+b\cdot x$ such that
		$$\frac{1}{\lambda^{n+2}} \int_{Q_{\lambda}} |\tilde{U}(t,x,0)-l(x)|^2 \, dt \, dx + \frac{1}{\lambda^{n+3+a}} \int_{Q^*_{\lambda}} y^a|\tilde{U}-l(x)|^2 \, dt \, dX < \lambda^{3} $$
	and $|a| + |b| \leq C.$
	After putting the value of $\tilde{U}$ and changing the variables, we get 
	\begin{align*}
		\frac{1}{\lambda^{(i+1)(n+2)}} \int_{Q_{\lambda^{i+1}}} |U(t,x,0)-\ell_{i+1}(x)|^2 \, dt \, dx + \frac{1}{\lambda^{(i+1)(n+3+a)}} \int_{Q^*_{\lambda^{i+1}}} y^a|U-\ell_{i+1}(x)|^2 \, dt \, dX < \lambda^{2(i+1)}{\omega}^2(\lambda^{i+1})
	\end{align*}
	where $\ell_{i+1}(x)=\ell_i(x)+\lambda^{i}\omega(\lambda^i)l(\lambda^{-i}x).$
	By putting $x=0$ we get $|a_{i+1}-a_i| \leq C \lambda^{i}\omega(\lambda^i)$. Take gradient to get $|b_{i+1}-b_i| \leq C \omega(\lambda^i)$. 
	%Hence we get 
	%$$ |a_{i+1}-a_i|+\lambda^{i}|b_{i+1}-b_i| \leq 2C \lambda^{i}\omega(\lambda^i)$$
\end{proof}

\begin{lemma}\label{l2}
	There exists a linear function $\ell_{t_0,x_0}(x)=a_{\infty}+b_{\infty}\cdot x$ such that for any solution $U$ of (\ref{mainpr}) with (\ref{l2u}) satisfies
	\begin{align*}
	\frac{1}{r^{n+3+a}}	\int_{Q_r^*(t_0,x_0,0)} |U(t,x,y)-\ell_{(t_0,x_0)}(x)|^2 y^a dt dX \leq C_lr^2K^2(r)
	\end{align*}
 for all $0<r<1/2$, where $C_l$ is a universal constant and $K$ is $1/2-$decreasing concave modulus of continuity.
\end{lemma}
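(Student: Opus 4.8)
The plan is to upgrade the discrete decay from Lemma \ref{lseq} into a decay estimate valid for every radius $r \in (0,1/2)$, and then to define $\ell_{(t_0,x_0)}$ as the limit of the linear functions $\ell_k$. First I would use \eqref{akk1} together with the summability $\sum_i \omega(\lambda^i) \le C_{\mathrm{sum}}$ to show that the sequences $(a_k)$ and $(b_k)$ are Cauchy: indeed $|b_k - b_m| \le C\sum_{i=m}^{k-1}\omega(\lambda^i)$ and $|a_k - a_m| \le C\sum_{i=m}^{k-1}\lambda^i \omega(\lambda^i)$, both tails going to zero. Denote the limits by $b_\infty$ and $a_\infty$ and set $\ell_{(t_0,x_0)}(x) = a_\infty + b_\infty\cdot x$. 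Quantitatively, telescoping gives
\[
|a_\infty - a_k| \le C\sum_{i=k}^\infty \lambda^i\omega(\lambda^i) \le C\lambda^k \sum_{i=k}^\infty \omega(\lambda^i), \qquad |b_\infty - b_k| \le C\sum_{i=k}^\infty \omega(\lambda^i).
\]
The key is to control these tail sums by a suitable modulus of continuity evaluated at $\lambda^k$; I will call this modulus $K$ and it will be (a constant times) $\sum_{i \ge k}\omega(\lambda^i)$ made into a $1/2$-decreasing concave function, which is legitimate precisely because $\sum_i \omega(\lambda^i)$ converges and $\omega$ is built from Dini-type data.

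Next, for a given $r \in (0,1/2)$ choose the integer $k$ with $\lambda^{k+1} \le r < \lambda^k$, so that $Q_r^*(t_0,x_0,0) \subset Q_{\lambda^k}^*(t_0,x_0,0)$ and $\lambda^{k} \le r/\lambda$. Then I would estimate by the triangle inequality
\[
\frac{1}{r^{n+3+a}}\int_{Q_r^*} |U - \ell_{(t_0,x_0)}|^2 y^a \, dt\, dX \le \frac{2}{r^{n+3+a}}\int_{Q_{\lambda^k}^*} |U - \ell_k|^2 y^a\, dt\, dX + \frac{2}{r^{n+3+a}}\int_{Q_r^*} |\ell_k - \ell_{(t_0,x_0)}|^2 y^a\, dt\, dX.
\]
For the first term, $r^{-(n+3+a)} \le \lambda^{-(n+3+a)}\lambda^{-k(n+3+a)}$, and the induction estimate \eqref{ind} bounds $\lambda^{-k(n+3+a)}\int_{Q_{\lambda^k}^*}|U-\ell_k|^2 y^a \le \lambda^{2k}\omega^2(\lambda^k) \le \lambda^{-2}r^2\,\omega^2(\lambda^k)$, and $\omega(\lambda^k) \le K(\lambda^k) \le C K(r)$ by monotonicity/doubling of $K$. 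For the second term, on $Q_r^*$ one has $|x - x_0| \le r$ so $|\ell_k(x) - \ell_{(t_0,x_0)}(x)| \le |a_k - a_\infty| + |b_k - b_\infty|\,r \le C\lambda^k\sum_{i\ge k}\omega(\lambda^i) + Cr\sum_{i\ge k}\omega(\lambda^i) \le C r K(r)$, using $\lambda^k \le r/\lambda$ and the definition of $K$; the weighted volume factor $\int_{Q_r^*} y^a\,dt\,dX \approx r^{n+3+a}$ cancels the prefactor. Collecting the two contributions yields the claimed bound with a universal constant $C_l$.

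The main obstacle is packaging the tail sums $\sum_{i \ge k}\omega(\lambda^i)$ into a genuine $1/2$-decreasing concave modulus of continuity $K$ with $K(\lambda^k) \gtrsim \omega(\lambda^k)$ and with the doubling property $K(\lambda^k) \le C K(r)$ for $\lambda^{k+1}\le r < \lambda^k$; this requires care because $\omega$ itself is only defined on the discrete scale $\{\lambda^k\}$ and is assembled as a convolution-type sum of the Dini moduli $\omega_1, \omega_2$. I would handle this exactly as in \cite{Ak} and \cite{DKM}: interpolate $\omega$ linearly in $\log$-scale between consecutive powers of $\lambda$, observe that the partial-tail function $k \mapsto \sum_{i \ge k}\omega(\lambda^i)$ inherits $1/2$-decreasing behavior from the fact that $\omega(\lambda^k) \ge \lambda^{k/2}$ forces geometric-type lower bounds, and then pass to the concave envelope (as was done for $\omega_1$ via \cite[Theorem 8]{L}). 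Once $K$ is in place, the rest of the argument is the routine triangle-inequality splitting described above.
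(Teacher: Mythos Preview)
Your proposal is correct and follows essentially the same route as the paper: show $(a_k),(b_k)$ are Cauchy via \eqref{akk1} and the summability of $\omega(\lambda^i)$, define $\ell_{(t_0,x_0)}$ as the limit, pick $k$ with $\lambda^{k+1}\le r<\lambda^k$, split by the triangle inequality into the $|U-\ell_k|$ piece (controlled by \eqref{ind}) and the $|\ell_k-\ell_{(t_0,x_0)}|$ piece (controlled by the tail $\sum_{i\ge k}\omega(\lambda^i)$), and then package the tail into a modulus $K$ by appealing to \cite{Ak}. The only items the paper makes explicit that you omit are the concrete form $K=K_1+K_2+K_3$ (with $K_1$ built from $\omega_1$, $K_2(r)=\sqrt r$, and $K_3$ the Lorentz-type integral) and the final rescaling from $U_\gamma$ back to $U$ together with the trivial treatment of the range $\gamma\lambda<r<1/2$; since Lemma~\ref{lseq} is stated for the rescaled function $U_\gamma$, be sure to include that undoing-of-rescaling step when you write out the details.
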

\begin{proof} WLOG we will assume that $(t_0,x_0)=(0,0).$ First we will estimate for $U_{\gamma}$ with small $r$ then will do it for $U.$ Note that $a_k$ and $b_k$ are convergent as $$	\sum_{i=0}^{\infty}\omega(\lambda^i) < \infty.$$  We define $a_{\infty}$ and $b_{\infty}$ as limit of $a_k$ and $b_k$ respectively.   We will denote $\ell_{(0,0)}$ by $\ell_0$. Take $r$ such that $\lambda^{k+1} \leq r \leq \lambda^{k}$ for some $k \in \mathbb{N}.$ Now using triangle inequality, (\ref{ind}) and (\ref{akk1}) we have
	\begin{align*}
		\fint_{Q_r} |U(t,x,0)-\ell_{0}(x)|^2 \, dt \, dx
		\leq & 2\fint_{Q_r} |U(t,x,0)-\ell_k(x)|^2 \, dt \, dx +2\fint_{Q_r} |\ell_k-\ell_{0}(x)|^2 \, dt \, dx\\
		\leq & 2\lambda^{-n-2} \lambda^{2k} \omega^2(\lambda^k) +2\fint_{Q_r} \sum_{i=k}^{\infty}|\ell_i-\ell_{i+1}|^2 \, dt \, dx\\
		\leq & 2\lambda^{-n-2} \lambda^{2k} \omega^2(\lambda^k) +4 \sum_{i=k}^{\infty}|a_i-a_{i+1}|^2 +4r^2\sum_{i=k}^{\infty}|b_i-b_{i+1}|^2\\
		\leq & 2\lambda^{-n-2} \lambda^{2k} \omega^2(\lambda^k) +4C\lambda^{2k}\sum_{i=k}^{\infty}\omega^2(\lambda^i)+4C\lambda^{2k}\sum_{i=k}^{\infty}\omega^2(\lambda^i)\\
		\leq &(8C+2)\lambda^{2k}\bigg(\sum_{i=k}^{\infty}\omega(\lambda^i)\bigg)^2
	\end{align*}
	Now follow the same lines of proof as in \cite{Ak} from $(4.30)$ to $(4.35)$ to get
	\begin{align}\label{ugam}
		\frac{1}{r^{n+3+a}}	\int_{Q_r^*} |U(t,x,y)-\ell_{0}(x)|^2 y^a dt dX \leq C_lr^2K^2(r),
	\end{align}
where $K=K_1(r)+K_2(r)+K_3(r)$ and 
$$K_1(r):=\underset{a \ge 0}{\text{sup}}\int_{a}^{a+\sqrt{r}}\frac{\omega_1(t)}{t}dt, \hspace{2mm} K_2(r):=\sqrt{r}\hspace{2mm} \text{and}\hspace{2mm}K_3(r)=\underset{a \ge 0}{\text{sup}}\int_{a}^{a+C_{n + 2}r }\rho^{\frac{2s - 1}{n + 2}}  (g^{**}(\rho))^{\frac{1}{2}}\frac{d\rho}{\rho}.$$
Now put the value of $U_{\gamma}$ and do change of variables to get
\begin{align}\label{ugam1}
	\fint_{Q_{r \gamma}^*} |U(t,x,y)-\ell_{0}(\gamma^{-1}x)|^2 y^a dt dX \leq {\gamma}^{-2}(1+||U(t,x,0)||_{L^2(Q_{\gamma})}+||U(t,X)||_{L^2(Q_{\gamma}^*)})C_l(r \gamma )^2 K^2(r\gamma/{\gamma}).
\end{align}
Call $K(r/{\gamma})$ by $K(r)$, $\ell_{0}(\gamma^{-1}x)$ by $\ell_{0}(x)$ and use the fact $\gamma$ is a universal constant to get 
\begin{align}\label{ugam3}
		\fint_{Q_r^*} |U(t,x,y)-\ell_{0}(x)|^2 y^a dt dX \leq \tilde{C_l}r^2K^2(r),
\end{align}
for $0<r< \gamma \lambda.$  For $\gamma \lambda<r<1/2,$ we will done by replacing $\tilde{C_l}$ to $C\tilde{C_l}/(\gamma^{n+6}\lambda K(\gamma \lambda)),$ call it again by $C_l$.
\end{proof}
\begin{lemma}\label{lthin}
	There exists a universal constant $C_{bdr}$ such that 
	$$|\nabla \ell_{(t_1,x_1)}-\nabla \ell_{(t_2,x_2)}| \le C_{bdr}K(|(t_1,x_1)-(t_2,x_2)|),$$
	and $$|U(t_1,x,0)-U(t_2,x,0)| \le C_{bdr}K(\sqrt{|t_1-t_2|}) |t_1-t_2|^{1/2}$$
	for $(t_i,x_i) \in Q_{1/2}$, where $\ell_{(t_i,x_i)}$ denotes the linear function constructed in Lemma \ref{l2}. 
	
\end{lemma}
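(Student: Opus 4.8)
The plan is to derive both estimates from the key oscillation bound of Lemma \ref{l2}, namely that at every base point $(t_0,x_0)\in Q_{1/2}$ there is a linear function $\ell_{(t_0,x_0)}(x)=a_\infty+b_\infty\cdot x$ with
\[
\frac{1}{r^{n+3+a}}\int_{Q_r^*(t_0,x_0,0)}|U-\ell_{(t_0,x_0)}(x)|^2\,y^a\,dt\,dX\le C_l r^2 K^2(r),\qquad 0<r<1/2,
\]
together with the companion thin-space bound one gets by tracing (or directly, as in Lemma \ref{linear}/\ref{l2}),
\[
\frac{1}{r^{n+2}}\int_{Q_r(t_0,x_0)}|U(t,x,0)-\ell_{(t_0,x_0)}(x)|^2\,dt\,dx\le C_l r^2 K^2(r).
\]

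First I would prove the gradient-oscillation estimate. Fix two points $(t_1,x_1),(t_2,x_2)\in Q_{1/2}$ and set $r:=|(t_1,x_1)-(t_2,x_2)|=\max\{\sqrt{|t_1-t_2|},\|x_1-x_2\|\}$; we may assume $r<1/4$ (the case $r\ge 1/4$ being absorbed into $C_{bdr}$ using the a priori $L^2$ bound and $K$ bounded below). Consider the cylinder $Q_{2r}(t_1,x_1)$, which contains $Q_r(t_2,x_2)$. On this common cylinder, $\ell_{(t_1,x_1)}-\ell_{(t_2,x_2)}$ is an affine function whose gradient is the constant vector $b_\infty^{(1)}-b_\infty^{(2)}$; by the triangle inequality applied to the thin-space estimates at the two base points on scale $2r$,
\[
\frac{1}{r^{n+2}}\int_{Q_r(t_2,x_2)}|\ell_{(t_1,x_1)}(x)-\ell_{(t_2,x_2)}(x)|^2\,dt\,dx
\lesssim r^2 K^2(2r)\lesssim r^2 K^2(r),
\]
using that $K$ is $1/2$-decreasing and concave so $K(2r)\le C K(r)$. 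Now a standard algebraic fact about affine functions — that the $L^2$ average of $|c+v\cdot x|^2$ over a cylinder of radius $r$ centered appropriately controls $|c|^2+r^2|v|^2$ from below up to a dimensional constant — converts the left side into a lower bound of the form $c(n)\,r^2|b_\infty^{(1)}-b_\infty^{(2)}|^2$ (after also using the estimate at scale $r$ to handle the mismatch of centers $x_1$ vs.\ $x_2$, which is $\le r$). Dividing by $r^2$ yields $|b_\infty^{(1)}-b_\infty^{(2)}|\le C_{bdr}K(r)$, which is the first claim since $\nabla\ell_{(t_i,x_i)}=b_\infty^{(i)}$.

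For the second claim, fix $t_1,t_2$ with $|t_1-t_2|<1/16$ and a common spatial point $x$; set $r:=\sqrt{|t_1-t_2|}$. The idea is to compare $U(t_1,x,0)$ and $U(t_2,x,0)$ through the linear approximant at the base point $(t_1,x)$ (say). By Lemma \ref{l2}'s thin-space companion on scale $2r$ at $(t_1,x)$, the average of $|U(t,x',0)-\ell_{(t_1,x)}(x')|^2$ over $Q_{2r}(t_1,x)$ is $\le C r^2K^2(r)$, and the same holds at $(t_2,x)$; since both cylinders contain a common sub-cylinder of radius $r$ once $r=\sqrt{|t_1-t_2|}$ (by the very definition of the parabolic metric, $(t_2,x)$ lies within parabolic distance $\le\sqrt{2}\,r$ of $(t_1,x)$), the triangle inequality gives
\[
\frac{1}{r^{n+2}}\int_{Q_r}|\ell_{(t_1,x)}(x')-\ell_{(t_2,x)}(x')|^2\,dt\,dx'\le C r^2 K^2(r).
\]
The affine difference $\ell_{(t_1,x)}-\ell_{(t_2,x)}$ has constant term $(a_\infty^{(1)}-a_\infty^{(2)})+(\text{terms from }b_\infty^{(i)}\cdot x)$; combining with the already-proved gradient bound $|b_\infty^{(1)}-b_\infty^{(2)}|\le C_{bdr}K(r)$ and the lower bound for affine $L^2$ averages, I extract $|\ell_{(t_1,x)}(x)-\ell_{(t_2,x)}(x)|\le C r K(r)$. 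Finally, $|U(t_i,x,0)-\ell_{(t_i,x)}(x)|$ is itself controlled pointwise: this is exactly the kind of pointwise bound that follows from the oscillation estimate together with the interior regularity of Lemma \ref{reg} applied to $U-\ell$ (or one invokes the already-established modulus from which $\nabla U$ continuity is being built); each such term is $\le C r K(r)$. Assembling, $|U(t_1,x,0)-U(t_2,x,0)|\le C_{bdr}\,rK(r)=C_{bdr}K(\sqrt{|t_1-t_2|})\,|t_1-t_2|^{1/2}$.

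The main obstacle I anticipate is the bookkeeping in passing from the integral oscillation estimates of Lemma \ref{l2} to pointwise differences of the linear approximants when the two base points have \emph{different} centers but comparable scale: one must carefully choose the common cylinder (radius comparable to the parabolic distance between the points) so that both Lemma \ref{l2} estimates apply there simultaneously, and then control the error incurred by re-expanding each affine function about a common center — this is where the $1/2$-decreasing, concave properties of $K$ (giving $K(2r)\le CK(r)$ and quasi-monotonicity) are essential, and getting the powers of $r$ to match up exactly requires care. The actual inequalities are routine once the geometry is set up; this mirrors the corresponding step in \cite{Ak}, and I would reference that argument for the purely real-variable parts.
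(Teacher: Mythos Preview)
Your proposal is correct and is essentially the same approach as the paper's own proof, which simply says ``proceeding like [the] previous lemma'' to record the thin-space companion
\[
\fint_{Q_r(t_0,x_0)}|U(t,x,0)-\ell_{(t_0,x_0)}(x)|^2\,dt\,dx\le Cr^2K^2(r)
\]
and then invokes ``Campanato characterization'' without further detail; what you have written is precisely the content of that Campanato argument spelled out (overlapping cylinders at scale comparable to the parabolic distance, lower bound for $L^2$-averages of affine functions, the $1/2$-decreasing property of $K$ to absorb doubling).

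One small cleanup: in the second estimate you appeal to ``interior regularity of Lemma \ref{reg} applied to $U-\ell$'' to get the pointwise bound $|U(t_i,x,0)-\ell_{(t_i,x)}(x)|\le CrK(r)$. This is unnecessary and slightly out of place here. The standard Campanato route is cleaner: letting $r\to 0$ in the thin-space oscillation estimate forces (at Lebesgue points, hence after choosing the precise representative) $U(t_i,x,0)=\ell_{(t_i,x)}(x)=a_\infty^{(t_i,x)}$, so the time-continuity statement reduces to $|a_\infty^{(t_1,x)}-a_\infty^{(t_2,x)}|\le C_{bdr}\,rK(r)$, which you already extract from the affine $L^2$ comparison exactly as for the gradients.
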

\begin{proof} Proceeding like previous lemma we get a linear function $\ell_{t_0,x_0}(x)=a_{\infty}+b_{\infty}\cdot x$ such that
	\begin{align*}
		\fint_{Q_r(t_0,x_0,0)} |U(t,x,0)-\ell_{(t_0,x_0)}(x)|^2 \, dt \, dx \leq Cr^2K^2(r).
	\end{align*}
Now	apply Companato Characterization to get the result.
\end{proof}
Now  to prove Theorem \ref{mthr}, we have to combine the above boundary esimates with known interior estimates in \cite{ck}. In order to do this we need the following rescaled version of interior estimate.
\begin{thrm}\label{irprop}
	Let $u$ be a weak solution of 
	\begin{align}\label{irp}
		u_t-div(B(x)\nabla u)-b\cdot \nabla u=div(g) \hspace{2mm} \text{in} \hspace{2mm} Q_1,
	\end{align}
with $||u||_{L^2(Q_1)} \le 1,$ where $B,$ $g$ are Dini continuous and $b$ is smooth. Then there exists a constant $C_{ir}>0$ depending on $n$, ellipticity, $||b||_1$ ($C^1$-norm of $b$), $||g||_{\infty},$ and $\Psi(1)$ such that for all $(t_1,x_1),(t_2,x_2) \in Q_{1/2},$ we have 
\begin{align*}
 |\nabla u(t_1,x_1)-\nabla u(t_2,x_2)|\le C_{ir}(\Psi(|(t_1,x_1)-(t_2,x_2)|)+|(t_1,x_1)-(t_2,x_2)|^{\alpha}),
\end{align*}
where $$\Psi(r)=\underset{a\ge 0}{\text{sup}}\int_{a}^{a+\sqrt{r}}\frac{\Psi_B(s)}{s}ds+\underset{a\ge 0}{\text{sup}}\int_{a}^{a+\sqrt{r}}\frac{\Psi_g(s)}{s}ds$$
and $$|u(t_1,x)-u(t_2,x)| \le C_{ir}(\Psi(\sqrt{|t_1-t_2|})+|t_1-t_2|^{1/4})\sqrt{|t_1-t_2|}$$
Furthermore for all $(t,x) \in Q_{1/2},$
\begin{align*}
	|\nabla u(t,x)|\le C_{ir}(1+\Psi(1)).
\end{align*}
\end{thrm}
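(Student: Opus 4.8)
The plan is to deduce Theorem~\ref{irprop} from the interior gradient-continuity estimate of \cite{ck}: the statement is essentially a repackaged, parabolically rescaled version of that result, the only genuine work being to eliminate the drift term $b\cdot\nabla u$ and to track how the constant depends on the data.

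First I would absorb the lower-order term into the data. Since $b$ is smooth, the identity $b\cdot\nabla u = \operatorname{div}(ub) - (\operatorname{div} b)\,u$ rewrites \eqref{irp} weakly as
\begin{equation*}
u_t - \operatorname{div}\big(B(x)\nabla u\big) = \operatorname{div}\big(g + u\,b\big) - (\operatorname{div} b)\,u \qquad \text{in } Q_1 .
\end{equation*}
By local boundedness and interior H\"older regularity for divergence-form parabolic equations with bounded measurable coefficients and $L^\infty$ data (De Giorgi--Nash--Moser), and using $\|u\|_{L^2(Q_1)}\le 1$, one gets $\|u\|_{L^\infty(Q_{3/4})} + [u]_{C^{\alpha}(Q_{3/4})} \le C$ with $C$ depending only on $n$, ellipticity, $\|b\|_1$ and $\|g\|_\infty$. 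Hence $g+ub$ is Dini continuous on $Q_{3/4}$ with Dini modulus bounded by $\Psi_g(\cdot)+C(\cdot)^{\alpha}$ and $\|g+ub\|_{L^\infty}\le C$, while $h:=-(\operatorname{div} b)u$ lies in $L^\infty(Q_{3/4})$ with $\|h\|_{L^\infty}\le C$.

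Next I would apply \cite{ck}. After a parabolic rescaling sending $Q_{3/4}$ to the unit cylinder --- which replaces $\Psi_B,\Psi_g$ by $\Psi_B(\tfrac34\cdot),\Psi_g(\tfrac34\cdot)$, preserving the Dini property and only decreasing the Dini integrals --- the equation for $u$ is of the form treated there: a divergence-form principal part with Dini coefficients, the divergence of a Dini vector field, plus a bounded zeroth-order right-hand side. That result gives, for $(t_i,x_i)\in Q_{1/2}$ and $d:=|(t_1,x_1)-(t_2,x_2)|$,
\begin{equation*}
|\nabla u(t_1,x_1) - \nabla u(t_2,x_2)| \le C_{ir}\big(\Psi(d) + d^{\alpha}\big),
\end{equation*}
together with $|\nabla u(t,x)|\le C_{ir}\big(1+\Psi(1)\big)$ on $Q_{1/2}$; the pieces $ub$ and $h$ introduced above are at worst H\"older and thus feed only into the $d^{\alpha}$ term, not into $\Psi$. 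Tracing constants through De Giorgi--Nash--Moser and through \cite{ck} shows $C_{ir}$ depends only on $n$, ellipticity, $\|b\|_1$, $\|g\|_\infty$ and $\Psi(1)$. The time-oscillation bound for $u$ then follows from the gradient estimate and the equation in the standard way --- testing the weak formulation against a spatial bump of radius $\sim\sqrt{|t_1-t_2|}$ centred at $x$, subtracting the frozen values of $B\nabla u$ and $g$ at the base point (whose contribution against the bump vanishes), and estimating the remaining errors by the gradient modulus $\Psi$ and by $\|h\|_{L^\infty}$ --- and is in any event included in the conclusion of \cite{ck}.

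The one point that needs a genuine (if short) argument rather than a bare citation is the bookkeeping in the reduction: checking that, once the drift is absorbed, the modified data have Dini modulus controlled by $\Psi_B$, $\Psi_g$ and $\|b\|_1$, so that the constant supplied by \cite{ck} is indeed a function of precisely the listed quantities, and that the auxiliary terms $ub$ and $(\operatorname{div} b)u$ --- which are not part of the original $\Psi$ --- are H\"older and hence harmless. I do not expect any conceptual obstacle: modulo the interior result of \cite{ck}, Theorem~\ref{irprop} is a matter of this reduction and careful tracking of the dependence of constants.
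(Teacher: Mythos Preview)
Your route is genuinely different from the paper's. The paper does \emph{not} reduce to \cite{ck} by citation; it reproves the interior estimate from scratch by rerunning the same Caffarelli-style compactness iteration already set up for the boundary problem: a closeness lemma (Lemma~\ref{ircl}) comparing $u$ to the solution of $v_t-\Delta v + b\cdot\nabla v=0$, a one-scale linear approximation (Lemma~\ref{linearu}), and an inductive iteration of affine functions parallel to Lemma~\ref{lseq}, followed by a Campanato-type argument. In particular the drift $b\cdot\nabla u$ is \emph{kept} in the approximating constant-coefficient problem rather than absorbed. Your plan --- rewrite $b\cdot\nabla u=\operatorname{div}(ub)-(\operatorname{div} b)u$, use De Giorgi--Nash--Moser to make $ub$ H\"older, then invoke \cite{ck} for $u_t-\operatorname{div}(B\nabla u)=\operatorname{div}(g+ub)+h$ --- is more economical and conceptually fine. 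What the paper's approach buys is full control over the exact shape of the output modulus $\Psi(r)=\sup_{a\ge0}\int_a^{a+\sqrt r}\Psi_B(s)s^{-1}ds+\sup_{a\ge0}\int_a^{a+\sqrt r}\Psi_g(s)s^{-1}ds$, which is tailored to mesh with the boundary modulus $K$ in the proof of Theorem~\ref{mthr}; your approach works only if \cite{ck} actually delivers this $\sup_a$ form (or one that dominates it), handles the bounded zeroth-order source $h$, and yields the time-oscillation bound in the stated form --- all plausible, but each a point you would have to verify against the precise statements in \cite{ck} rather than assert.
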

The proof of the Theorem \ref{irprop} will be done by compactness method. We will sketch it as most of the details are similar to boundary estimate. We need the following lemmas.
\begin{lemma}\label{ircl}
	Let $u$ be a weak solution of (\ref{irp})
	with $||u||_{L^2(Q_1)} \le 1.$ 
	Then, given any $\epsilon >0$, there exixts a $\delta=\delta(\epsilon, n, \text{ellipticity},||b||_1,||g||_{\infty} ) >0$ such that if 
	\begin{align}\label{delint}
		\omega_{B}(1) \leq \delta^2 \hspace{2mm} \text{and} \hspace{2mm} \omega_{g}(1) \leq \delta^2.
	\end{align}
	then there exists $v$ which solves weakly 
	\begin{equation}\label{intexp}
		\left\{
		\begin{aligned}{}
			v_t-  \Delta v +b\cdot \nabla v&=0 \hspace{2mm} \text{in} \hspace{2mm}Q_{1/2}\\
			v&=u	\hspace{2mm} \text{on} \hspace{2mm}\partial_pQ_{1/2}
		\end{aligned}
		\right.
	\end{equation}
	such that 
	\begin{align*}
		\int_{Q_{1/2}}|u-v|^2 dt dX < \epsilon^2 .
	\end{align*}
\end{lemma}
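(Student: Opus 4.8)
The plan is to mimic the compactness argument of Lemma \ref{closeness}, which is the boundary analogue of the present statement. Let $v$ be the solution of the Cauchy--Dirichlet problem
\begin{equation*}
\left\{
\begin{aligned}{}
v_t - \Delta v + b\cdot \nabla v &= 0 &\text{in}~Q_{1/2},\\
v &= u &\text{on}~\partial_p Q_{1/2},
\end{aligned}
\right.
\end{equation*}
whose existence and uniqueness in the appropriate parabolic energy class is classical (here $b$ is smooth, so no difficulty arises). First I would work with the Steklov averages $u_h, v_h$ so that time differentiation is legitimate, test the equation for $u_h - v_h$ against itself in both the equation \eqref{irp} (for $u$) and the equation \eqref{intexp} (for $v$), and subtract. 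The $v_t$-term and the Laplacian term combine to produce $\partial_t(u_h - v_h)(u_h-v_h)$ and $|\nabla(u_h-v_h)|^2$, while the discrepancy $(B(x)-I)\nabla u_h$ and $g$ appear as error terms, exactly as in \eqref{main}. The transport term $b\cdot\nabla u - b\cdot\nabla v = b\cdot\nabla(u_h-v_h)$ is handled by Young's inequality: $|b\cdot\nabla(u_h-v_h)(u_h-v_h)| \le \tfrac14|\nabla(u_h-v_h)|^2 + \|b\|_\infty^2 (u_h-v_h)^2$, the first piece absorbed and the second controlled by Poincar\'e for each time slice (using $u_h - v_h = 0$ on the lateral boundary).

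Second, the term $\int y$-free analogue $\int_{Q_{1/2}}|B(x)-I|^2|\nabla u_h|^2$ is bounded by $\omega_B(1)^2$ times a Caccioppoli-type energy of $u_h$ on a slightly larger cube, which in turn is controlled by $\|u\|_{L^2(Q_1)}^2 \le 1$ together with $\|g\|_\infty$; similarly the $g$-term is bounded by $\omega_g(1)^2$ after noting $g$ is Dini continuous and we may normalize so that the relevant integral is small. Collecting everything, choosing first a small $\varepsilon$ in Young's inequality so the gradient term is absorbed and the Poincar\'e constant works out, and then choosing $\delta$ small so that $\omega_B(1) \le \delta^2$ and $\omega_g(1)\le\delta^2$ force the right-hand side below the threshold, I obtain
\begin{align*}
\int_{Q_{1/2}} y\text{-free}~|\nabla(u_h-v_h)|^2\,dt\,dX + \sup_t \int (u_h-v_h)^2\,dX \le \epsilon^2 .
\end{align*}
Finally, letting $h\to 0$ and using $u \in C([-1,1];L^2)\cap L^2([-1,1];H^1)$ gives $u_h - v_h \to u - v$ strongly, so $\int_{Q_{1/2}}|u-v|^2 < \epsilon^2$, which is the claim. (The initial-time term plays the role of $I_h(0)$ and vanishes in the limit since $v = u$ on $\{t = -1/2\}$.)

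The main obstacle, as in the boundary case, is the bookkeeping of the two successive smallness choices: one must verify that the constant $(1/4 - \varepsilon C_T)C_P - \varepsilon C_T$-type quantity coming from combining Young, the trace/Poincar\'e inequalities and the absorption of the gradient term is genuinely positive before $\delta$ is chosen, so that the dependence $\delta = \delta(\epsilon, n, \text{ellipticity}, \|b\|_1, \|g\|_\infty)$ is honest and does not circularly depend on $\epsilon$ through $\varepsilon$. A minor additional point is that $b$ is only smooth, not necessarily bounded a priori on all of $Q_1$, but since we work on the fixed cube $Q_{1/2}$ and $b$ is smooth there, $\|b\|_{C^1(Q_{1/2})}$ is finite and enters only through $\|b\|_\infty$ in the Young step and through the well-posedness of \eqref{intexp}; this is why $\|b\|_1$ is listed among the parameters $\delta$ may depend on. Everything else is a routine adaptation of the computation already carried out for Lemma \ref{closeness}, with the weight $y^a$ and the Neumann trace term simply removed.
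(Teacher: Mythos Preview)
Your proposal is correct and follows exactly the route the paper takes: the paper's own proof of this lemma is a two-line sketch that says to invoke the existence result (Remark \ref{intexsit}) and then ``proceed along the similar lines as in [Lemma \ref{closeness}]'', citing the Caccioppoli/energy estimate from \cite{ck} in place of Lemma \ref{energy}. Your write-up simply spells out that adaptation in detail (Steklov averages, testing $u_h-v_h$ against itself, handling the new transport term $b\cdot\nabla(u_h-v_h)$ by Young and Poincar\'e, replacing $g$ by $g-g(x_0)$ so that the error is governed by $\omega_g(1)$, and then choosing $\varepsilon$ before $\delta$), which is precisely what the paper intends.
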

\begin{proof}
	We have existence of solution of (\ref{intexp}) by Remark \ref{intexsit}.  Now proceed along the similar lines as in previous lemma. For the required Energy estimate see Lemma 1.2.3 (\cite{ck}). 
\end{proof}
\begin{lemma}\label{linearu}
	There exist $0< \delta_{int}, \lambda_{int} <1$ (depending on $n$, ellipticity, $||b||_1$),
	a linear function $\ell(x) =\mathcal{A}_{int} + \mathcal{B}_{int} \cdot x$ and constant $C_{int} =C(n,||b||_1)>0$
	such that for any solution $u$ of (\ref{irp}) with $||u||_{L^2(Q_1)}$ and satisfies (\ref{delint}),
	$$\frac{1}{\lambda^{n+2}} \int_{Q_{\lambda}} |u-\ell(x)|^2 dt dx < \lambda^3 $$
	and $|\mathcal{A}_{int}| + |\mathcal{B}_{int}| \leq C_{int}.$
\end{lemma}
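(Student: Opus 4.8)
The plan is to repeat, in the interior setting, the compactness-plus-Taylor-expansion argument used in the proof of Lemma~\ref{linear}, with Lemma~\ref{ircl} playing the role of Lemma~\ref{closeness}. Fix an arbitrary $\epsilon\in(0,1)$, to be chosen at the end. By Lemma~\ref{ircl} there is $\delta=\delta(\epsilon,n,\mathrm{ellipticity},\|b\|_1,\|g\|_\infty)>0$ so that whenever \eqref{delint} holds there is a weak solution $v$ of \eqref{intexp} with $\int_{Q_{1/2}}|u-v|^2\,dt\,dX<\epsilon^2$. Since $b$ is smooth, interior parabolic regularity (e.g.\ Lemma~1.2.3 of \cite{ck} together with standard bootstrapping) gives that $v$ is smooth in the interior of $Q_{1/2}$ and that there is $\tilde C=\tilde C(n,\|b\|_1)$ with
\[
|v(0,0)|+|\nabla_x v(0,0)|+\sup_{Q_{1/4}}\big(|v_t|+|D^2_x v|\big)\le \tilde C\,\|v\|_{L^2(Q_{1/2})}.
\]
Define $\ell(x)=v(0,0)+\nabla_x v(0,0)\cdot x=:\mathcal{A}_{int}+\mathcal{B}_{int}\cdot x$.

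Next I would control $\|v\|_{L^2(Q_{1/2})}$. By the triangle inequality, $(a+b)^2\le 2a^2+2b^2$, the closeness estimate above, and the normalization $\|u\|_{L^2(Q_1)}\le1$,
\[
\int_{Q_{1/2}}|v|^2\,dt\,dX\le 2\int_{Q_{1/2}}|u-v|^2\,dt\,dX+2\int_{Q_{1/2}}|u|^2\,dt\,dX\le 2\epsilon^2+2\le4,
\]
so $|\mathcal{A}_{int}|+|\mathcal{B}_{int}|\le 4\tilde C=:C_{int}$, and, moreover, all the interior sup-bounds on $v$ become universal — call the resulting constant $D=D(n,\|b\|_1)$. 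Then, for $(t,x)\in Q_{1/4}$, splitting
\[
v(t,x)-\ell(x)=\big(v(t,x)-v(0,x)\big)+\big(v(0,x)-v(0,0)-\nabla_x v(0,0)\cdot x\big)
\]
and applying the mean value theorem in $t$ and Taylor's theorem in $x$ gives $|v(t,x)-\ell(x)|\le D(|t|+|x|^2)$.

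Combining, and using $(a+b)^2\le 2a^2+2b^2$ once more,
\[
\frac{1}{\lambda^{n+2}}\int_{Q_\lambda}|u-\ell(x)|^2\,dt\,dx
\le \frac{2\epsilon^2}{\lambda^{n+2}}+\frac{2D^2}{\lambda^{n+2}}\int_{Q_\lambda}\big(|x|^4+|t|^2\big)\,dt\,dx
\le \frac{2\epsilon^2}{\lambda^{n+2}}+CD^2\lambda^4
\]
for every $\lambda<1/4$ (parabolic scaling makes $\int_{Q_\lambda}(|x|^4+|t|^2)\,dt\,dx\lesssim\lambda^{n+6}$). Now I would first choose $\lambda=\lambda_{int}<1/4$ small enough that $CD^2\lambda^4\le\lambda^3/2$, and then $\epsilon$ small enough that $2\epsilon^2/\lambda^{n+2}\le\lambda^3/2$, i.e.\ $\epsilon^2=\lambda^{n+5}/4$; this choice of $\epsilon$ then fixes $\delta=\delta_{int}$ through Lemma~\ref{ircl}. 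The only genuinely non-routine point is securing the \emph{quantitative} interior $C^{2,1}$-type estimate for $v$ with constant depending only on $n$ and $\|b\|_1$ (and not further on $\|u\|_{L^2}$ beyond the normalization) — which is precisely why the lower-order coefficient $b$ is taken smooth; the rest is the same bookkeeping as in Lemma~\ref{linear}.
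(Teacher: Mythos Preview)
Your proposal is correct and follows exactly the approach the paper intends: the paper's own proof of Lemma~\ref{linearu} is the one-line sketch ``Since $v$, solution of \eqref{intexp}, is smooth, we can proceed like the proof of Lemma~\ref{linear},'' and you have carried out precisely that, replacing Lemma~\ref{closeness} by Lemma~\ref{ircl} and the degenerate regularity Lemma~\ref{reg} by standard interior parabolic regularity for $v$. The only cosmetic point is that your $\delta$ inherits a dependence on $\|g\|_\infty$ from Lemma~\ref{ircl}, which the statement of Lemma~\ref{linearu} omits; this is a harmless imprecision already present in the paper.
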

\begin{proof}
	Since $v$, solution of (\ref{intexp}), is smooth. We can proceed like the proof of the Lemma \ref{linear}. 
\end{proof}

\begin{lemma}
	There exist a sequence of linear functions $\ell^{k}_{int}(x) =a^k_{int} + b^k_{int} \cdot x$ and a constant $C_{int}=C(n,||b||_1)>0$
	such that for any weak solution $u$ of (\ref{irp}) with $||u||_{L^2(Q_1)}$, 
	\begin{align}\label{indu}
		\frac{1}{\lambda_{int}^{k(n+2)}} \int_{{Q}_{\lambda_{int}^k}} |u-\ell^{k}_{int}(x)|^2 \, dt \, dx < \lambda^{2k}_{int}{\psi}^2(\lambda_{int}^k)
	\end{align}
and
	\begin{align}\label{akku}
		|a^{k+1}_{int}-a^k_{int}| \leq C_{int} \lambda^k_{int} \psi(\lambda_{int}^k) \hspace{2mm} \text{and} \hspace{2mm} |b^{k+1}_{int}-b^k_{int}| \leq C_{int} \psi(\lambda^k_{int})
	\end{align}
	
\end{lemma}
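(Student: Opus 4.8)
The plan is to mimic the proof of Lemma~\ref{lseq} almost verbatim, replacing the boundary-value setup by the interior one, so I will set up an induction on $k$ and at each step rescale the solution and apply the one-step improvement of Lemma~\ref{linearu}. For $k=0$ one takes $\ell^0_{int}(x)=0$, and the hypothesis $\|u\|_{L^2(Q_1)}\le 1$ together with the normalisation $\psi(1)=1$ (built into the definition of $\psi$, the interior analogue of $\omega$) gives \eqref{indu} for $k=0$. Assuming \eqref{indu} and \eqref{akku} hold for $k=0,1,\dots,i$, I would define the rescaled function
\[
\tilde u(t,x):=\frac{u(\lambda_{int}^{2i}t,\lambda_{int}^i x)-\ell^{i}_{int}(\lambda_{int}^i x)}{\lambda_{int}^{i}\psi(\lambda_{int}^i)},
\]
which solves an equation of the same type \eqref{irp} with rescaled data $\tilde B(x)=B(\lambda_{int}^i x)$, a rescaled lower-order coefficient $\tilde b$, and a new right-hand side $\operatorname{div}(\tilde g)$ where $\tilde g$ absorbs both the original $g$ and the commutator term $(I-A(\lambda_{int}^i x))\nabla\ell^{i}_{int}$ coming from freezing the coefficients. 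Rewriting the inductive hypothesis \eqref{indu} for $k=i$ in the rescaled variables yields $\|\tilde u\|_{L^2(Q_1)}\le 1$.

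The heart of the argument is then to verify that the smallness conditions \eqref{delint} hold for $\tilde u$, i.e. $\omega_{\tilde B}(1)\le\delta_{int}^2$ and $\omega_{\tilde g}(1)\le\delta_{int}^2$. For $\tilde B$ this follows from $|B(\lambda_{int}^i x)-I|\le \omega_B(\lambda_{int}^i)\le\omega_B(1)$ and the choice of the parameters defining $\psi$ (in particular the interior analogue $\psi_1$ of $\omega_1$, which is designed precisely so that $\omega_B(\gamma r)/\tilde\delta\le \psi_1(r)$). For $\tilde g$ one uses the inductive bound $|b^i_{int}|\le \sum_{j=0}^{i-1}C_{int}\psi(\lambda_{int}^j)\le C_{int}C_{sum}=:C_1$ — this is the interior analogue of the \textbf{Bound for $F$} computation in Lemma~\ref{lseq} — to control the commutator term by $C_1\,\omega_B(\lambda_{int}^i)$, and then divides by $\psi(\lambda_{int}^i)\ge\psi_1(\lambda_{int}^i)\psi_2(\lambda_{int}^i)\ge \omega_B(\lambda_{int}^i)/\tilde\delta\cdot(\cdots)$ to see the ratio is $\lesssim \tilde\delta$; the original $g$ contributes a factor $\psi_2(\lambda_{int}^i)/\psi(\lambda_{int}^i)\le\tilde\delta$. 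Choosing $\tilde\delta$ small enough (so that, say, $2C_1^2\tilde\delta^2<\delta_{int}^2$) makes both conditions hold.

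Once the hypotheses of Lemma~\ref{linearu} are in force, it produces a linear function $l(x)=a+b\cdot x$ with $|a|+|b|\le C_{int}$ and
\[
\frac{1}{\lambda_{int}^{n+2}}\int_{Q_{\lambda_{int}}}|\tilde u-l(x)|^2\,dt\,dx<\lambda_{int}^3.
\]
Undoing the rescaling, set $\ell^{i+1}_{int}(x):=\ell^{i}_{int}(x)+\lambda_{int}^{i}\psi(\lambda_{int}^i)\,l(\lambda_{int}^{-i}x)$; a change of variables turns the last display into \eqref{indu} for $k=i+1$, provided $\psi(\lambda_{int}^{i+1})\ge\lambda_{int}^{1/2}\psi(\lambda_{int}^i)$, which holds because $\psi(\lambda_{int}^k)\ge\lambda_{int}^{k/2}$ by construction. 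Evaluating $\ell^{i+1}_{int}-\ell^{i}_{int}$ at $x=0$ gives $|a^{i+1}_{int}-a^i_{int}|=\lambda_{int}^i\psi(\lambda_{int}^i)|a|\le C_{int}\lambda_{int}^i\psi(\lambda_{int}^i)$ and taking the gradient gives $|b^{i+1}_{int}-b^i_{int}|=|b|\le C_{int}\psi(\lambda_{int}^i)$, i.e.\ \eqref{akku} for $k=i+1$, completing the induction. The main obstacle — really the only nontrivial point — is the bookkeeping of the weight functions $\psi_1,\psi_2,\psi$ and the Dini-type sums: one must have already established an interior analogue of the summability lemma $\sum_i\omega(\lambda^i)\le C_{sum}$, so that the constant $C_1$ bounding $|b^i_{int}|$ is genuinely universal and independent of $i$; everything else is the routine rescaling already carried out in the boundary case.
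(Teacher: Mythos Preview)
Your proposal is correct and follows essentially the same approach as the paper. The paper's own proof simply defines the auxiliary weights $\psi_1,\psi_2,\psi_3,\psi$ (exactly the ``bookkeeping'' you flag at the end) and then says ``Now proceed as in Lemma~\ref{lseq}''; you have spelled out what that instruction means, with the same rescaling, the same verification of the smallness conditions via the induction bound $|b^i_{int}|\le C_1$, and the same application of Lemma~\ref{linearu}.
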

\begin{proof} Note that $u_{\gamma}(t,x)=u(\gamma^2 t, \gamma x)$  will satisfy (\ref{irp}) with $\Psi_B(\gamma r)$ and $\gamma g(\gamma x)$. Now we will some define functions:\\
	Define $$\tilde{\psi}_1(r):=max\{\Psi_B(\gamma_{int}r)/\tilde{\delta}_{int},r\},$$
	where $\gamma_{int}$ and $\tilde{\delta}_{int}$ will be fixed later. By [\cite{L} Theorem 8] we can assume $\tilde{\Psi}_1$ is concave. Without loss of generality we can assume $\tilde{\Psi}_1(1)=1$. Finally we define
	\begin{align*}
		\psi_1(r):=\tilde{\psi}_1(\sqrt{r}).
	\end{align*}
	Then, $\psi_1(r)$ is $1/2$-decreasing function. Also, using change of variables we have $\psi_1(r)$ is Dini continuous. 
	
	Similarly, define $\psi_2(r)$ corresponding to Dini conutinity of $g$, i.e., $\omega_g.$ Finally define $$\psi :=max\{\psi_3(r),\sqrt{r}\}$$
	where $\psi_3(\lambda^k):=\sum_{i=0}^{k}	\psi_1(\lambda^{k-i})	\psi_2(\lambda^i).$ 
	
	Now proceed as in the Lemma \ref{lseq}.
\end{proof}

\begin{proof}[Sketch of proof of Theorem \ref{irprop} ]
	First we will get the estimate as in Lemma \ref{l2} and then use Companato type characterization to get the implications of the theorem.
\end{proof}

\begin{proof}[Proof of Theorem \ref{mthr}]
	Let $(t_1,x_1,y_1)$ and $(t_2,x_2,y_2)$ be in $Q^*_{1/2}.$ Without loss of generality we shall assume $y_1 \leq y_2.$ We will do proof in two cases:
	\begin{itemize}
		\item[(i)]$|(t_1,X_1)-(t_2,X_2)| \leq \frac{y_1}{4}.$
		\item[(ii):] $|(t_1,X_1)-(t_2,X_2)| \geq \frac{y_1}{4}.$
	\end{itemize}
		In the first case, we will consider 
		$$\tilde{W}(t,x,y):=U(t,x,y)-\ell_{(t_1,x_1)}(x),$$
		then using Lemma \ref{l2} for $r=y_1/2$, we have
		\begin{align*}
			\int_{Q_{\frac{y_1}{2}}^*(t_1,x_1,y_1)} |U(t,x,y)-\ell_{(t_1,x_1)}(x)|^2 y^a dt dX \leq C_nC_l {y_1}^{n+5+a}K^2(y_1),
		\end{align*}
	where $C_n=2^{n+5+a}.$ Note that the following rescaled function
	\begin{align*}
		W(t,x,y)=\tilde{W}(t_1+y_1^2t,\; x_1+y_1x,\;y_1y)/y_1
	\end{align*}
solves $W_t-div(B(y_1X)\nabla W)-(a/y)W_y=-div ( (B(y_1X)-I )\cdot \nabla \tilde{\ell}),$ in $\textbf{B}_{1/2}$ and
 $$||W||_{L^2(\textbf{Q}_{1/2})} \le C_nC_lK(y_1) .$$ Now apply Proposition \ref{irprop} to get
    \begin{align*}
	|\nabla W(t,X)-\nabla W(0,0,1)| \le C_{ir}||W||_{L^2(\textbf{Q}_{1/2})}(K(y_1|(t,X)-(0,0,1)|)+|(t,X)-(0,0,1)|^{1/2}).
    \end{align*}
 Using $\ell_{(t_1,x_1)}$ is linear and rescaling back to get 
   \begin{align*}
	|\nabla U(t_1,X_1)-\nabla U(t_2,X_2)| &\leq C(K(y_1) K(|(t_1,X_1)-(t_2,X_2)|)+\frac{K(y_1)}{\sqrt{y_1}}|(t_1,X_1)-(t_2,X_2)|^{1/2})
   \end{align*}
Use $K$ is $1/2-$decreasing and bounded to get 
 \begin{align*}
	|\nabla U(t_1,X_1)-\nabla U(t_2,X_2)| &\leq C K(|(t_1,X_1)-(t_2,X_2)|).
\end{align*}
Similary, we get
$$|U(t_1,X)-U(t_2,X)| \le CK(|t_1-t_2|^{1/2})|t_1-t_2|^{1/2}.$$
This proves the first case.

To prove the case(ii) first note that using triangle inequality we get
\begin{align*}
	|y_2| &=|(t_2,X_2)-(t_2,x_2,0)|\le |(t_2,X_2)-(t_1,X_1)|+|(t_1,X_1)-(t_1,x_1,0)|+|(t_1,x_1,0)-(t_2,x_2,0)|\\
	&\le |(t_2,X_2)-(t_1,X_1)|+|y_1|+ |(t_2,X_2)-(t_1,X_1)|\le 6|(t_2,X_2)-(t_1,X_1)|.
\end{align*}
Use Lemma \ref{lthin} and Lemma \ref{l2} to get the following estimate
\begin{align*}
	|\nabla U(t_1,X_1)-\nabla U(t_2,X_2)|
	\le& |\nabla U(t_1,X_1)-\nabla \ell_{(t_1,x_1)}|+|\nabla \ell_{(t_1,x_1)}-\nabla \ell_{(t_2,x_2)}|+|\nabla U(t_2,X_2)-\nabla \ell_{(t_2,x_2)}|\\
	\le &CK(y_1)+CK(|(t_1,x_1)-(t_2,x_2)|)+CK(y_2)
	\le CK(6|(t_1,X_1)-(t_2,X_2)|).
\end{align*} 
Now we will get estimate for continuity in $t$ variable:
\begin{align*}
	|U(t_1,X)-U(t_2,X)| &\le |U(t_1,x,y)-U(t_1,x,0)+U(t_2,x,0)-U(t_2,x,y)|+|U(t_1,x,0)-U(t_2,x,0)|\\
	&\le|U_y(t_1,x,\xi_1)-U_y(t_2,x,\xi_2)||y| + CK(|t_1-t_2|^{1/2})|t_1-t_2|^{1/2} \\
	&\le C(K(|(t_1,x,\xi_1)-(t_1,x,\xi_2)|)+K(|t_1-t_2|^{1/2}))|t_1-t_2|^{1/2},
\end{align*}
where $\xi_1,\xi_2$ lies between $0$ and $y.$ Using the condition of Case(ii) we get 
 $$|U(t_1,X)-U(t_2,X)| \le CK(6|t_1-t_2|^{1/2})|t_1-t_2|^{1/2}.$$
This completes the proof of case(ii). \end{proof}
\section{Appendix}\label{a}
	\begin{thrm}\label{exis} For every $f \in L^2(Q_1)$ and $F \in L^2(Q_1^*)$, there exists a $u \in C(-1,1;V(B_1^*))$ unique weak solution to 
		\begin{align*}
			\begin{cases}
			y^a \partial_t u - \operatorname{div}(y^a B(x) \nabla u)= \operatorname{div}(y^a F)&\hbox{in}~Q^*_1 \\
		y^a u_y\big|_{y=0}=f&\hbox{on}~Q_1\\
		u=0&\hbox{on}~\partial_{p}Q^*_1\setminus Q_1,
	\end{cases}
		\end{align*}
	where $V(B_1^*)=\left\{v\in H_a^1(B_1^*):\partial B_1^*\setminus\{y=0\}\right\}.$	\end{thrm}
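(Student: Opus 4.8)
The plan is to establish existence by the Galerkin method and uniqueness by a standard energy estimate, carried out in the weighted functional framework adapted to the Muckenhoupt $A_2$ weight $y^a$, $a=1-2s\in(-1,1)$. Write $H=L^2_a(B_1^*)$ and let $V=V(B_1^*)$ be the subspace of $H^1_a(B_1^*)$ of functions whose trace vanishes on $\partial B_1^*\setminus\{y=0\}$. Since this is a portion of $\partial B_1^*$ of positive measure, the weighted Poincar\'e inequality \cite{FKS} yields $\|v\|_{H}\le C\big(\int_{B_1^*}y^a|\nabla v|^2\,dX\big)^{1/2}$ for $v\in V$, and the weighted trace inequality \cite{Nek} provides a bounded map $V\to L^2(B_1)$, $v\mapsto v(\cdot,0)$; in particular $V\hookrightarrow H\hookrightarrow V^*$ is a Gelfand triple. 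Introduce the time-independent bilinear form $\mathfrak a(u,v)=\int_{B_1^*}y^aB(x)\nabla u\cdot\nabla v\,dX$, which is bounded on $V$ by the $L^\infty$ bound on $B$ and satisfies, by uniform ellipticity together with the Poincar\'e inequality, the coercivity estimate $\mathfrak a(v,v)\ge c\,\|v\|_V^2$. Finally, $F\in L^2_a(Q_1^*)$ and $f\in L^2(Q_1)$ define, for a.e. $t$, a functional $L(t)\in V^*$ via $\langle L(t),v\rangle=-\int_{B_1^*}y^aF(t,\cdot)\cdot\nabla v\,dX+\int_{B_1}f(t,\cdot)\,v(\cdot,0)\,dx$, with $L\in L^2(-1,1;V^*)$ by Cauchy-Schwarz and the trace bound.

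For the Galerkin approximation I would choose $\{w_k\}_{k\ge1}$ to be an $H$-orthonormal basis that is simultaneously orthogonal in $V$ — for instance the eigenfunctions of the weighted mixed-boundary Laplacian, available since $H^1_a(B_1^*)\hookrightarrow L^2_a(B_1^*)$ is compact — and seek $u_m(t)=\sum_{k=1}^m d_{mk}(t)w_k$ solving
\[
\int_{B_1^*}y^a\partial_t u_m\,w_j\,dX+\mathfrak a(u_m,w_j)=\langle L(t),w_j\rangle,\qquad j=1,\dots,m,\qquad u_m(-1)=0.
\]
By $H$-orthonormality this is a linear Carath\'eodory system for $(d_{m1},\dots,d_{mm})$ with constant coefficients and forcing in $L^2(-1,1)$, hence uniquely solvable with absolutely continuous solution on $[-1,1]$. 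Multiplying by $d_{mj}$ and summing over $j$ gives
\[
\tfrac12\,\tfrac{d}{dt}\|u_m(t)\|_H^2+\mathfrak a(u_m,u_m)=\langle L(t),u_m\rangle,
\]
and then ellipticity, Young's inequality to absorb $\int y^aF\cdot\nabla u_m$, the trace inequality to bound $\int_{B_1}f\,u_m(\cdot,0)$, the Poincar\'e inequality to control the resulting lower-order term, and integration in $t$ produce the uniform estimate
\[
\sup_{t\in[-1,1]}\|u_m(t)\|_H^2+\int_{-1}^1\|u_m\|_V^2\,dt\le C\Big(\int_{Q_1}f^2\,dt\,dx+\int_{Q_1^*}y^a|F|^2\,dt\,dX\Big).
\]

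To finish, I would extract a subsequence with $u_m\rightharpoonup u$ in $L^2(-1,1;V)$ and $u_m\overset{*}{\rightharpoonup}u$ in $L^\infty(-1,1;H)$ and pass to the limit in the time-integrated weak formulation \eqref{weaksoldef} tested against functions of the form $w_j\,\eta(t)$, $\eta\in C^1$, then extend to all admissible $\phi$ by density; this identifies $u$ as a weak solution in the sense of \eqref{weaksoldef}, with the boundary integral encoding the Neumann condition $y^au_y|_{y=0}=f$. The equation then gives $\partial_t u\in L^2(-1,1;V^*)$, so $u\in C([-1,1];H)$ by the Lions-Magenes embedding, and choosing $\eta$ with $\eta(-1)=1$ in the limit identity forces $u(-1)=0$. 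Uniqueness follows by subtracting two solutions: the difference solves the homogeneous problem with zero initial data, and testing against it, using coercivity and Gr\"onwall, forces it to vanish identically. Alternatively one could bypass the explicit construction and invoke J.-L. Lions' abstract existence theorem for parabolic equations in a Gelfand triple, whose hypotheses — boundedness and coercivity of $\mathfrak a$ — were verified above. The only step needing genuine care is the boundary term $\int_{B_1}f\,u(\cdot,0)$: it must be estimated by the weighted trace inequality and then split by Young's inequality so that its gradient part is absorbed into the coercivity constant while its $L^2_a$ part is controlled by Poincar\'e before Gr\"onwall is applied; everything else is routine once the weighted Poincar\'e and trace inequalities are in hand.
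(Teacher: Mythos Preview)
Your argument is correct, but it is organized differently from the paper's. The paper does not run a Galerkin scheme directly on the given data; instead it first approximates $f$ and $F$ by smooth compactly supported $f_k,F_k$, notes that then the forcing $g_k$ satisfies $g_k'\in L^2(-1,1;V^*)$, and applies Theorem~5.1 of Duvaut--Lions \cite{DL} to obtain approximate solutions $u_k$ with the extra regularity $(u_k)_t\in L^2(-1,1;V)$. With that in hand the energy computations become purely algebraic (no Steklov averaging or Galerkin projection is needed at the approximate level), and one derives uniform bounds on $u_k$ in $L^2(-1,1;V)$ and on $(u_k)_t$ in $L^2(-1,1;V^*)$, then passes to a weak limit. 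Uniqueness in the paper is done via Steklov averages rather than by directly testing with the difference. Your Galerkin route (or, equivalently, your alternative of invoking Lions' abstract theorem in the Gelfand triple) is more self-contained and avoids the detour through smooth data; the paper's route buys a bit more time-regularity at the approximate stage, which streamlines the energy identities there. Either way the key analytic inputs are exactly the ones you identified: the weighted Poincar\'e inequality of \cite{FKS} for coercivity on $V$, and the weighted trace inequality of \cite{Nek} to control the boundary term $\int_{B_1} f\,u(\cdot,0)$.
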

\begin{proof}
	Consider $f \in L^2(Q_1)$ and $F \in L^2_a(Q_1^*).$ Now, using the density of smooth function, we have $f_k \in C^{\infty}_c(Q_1)$ and $F_k \in C^{\infty}_c(Q_1^*)$ such that $f_k \rightarrow f$ in $L^2(Q_1)$ and $F_k\rightarrow F$ in $L^2_a(Q_1^*).$ Define $g_k:[-1,1]\rightarrow V^*$ as following: Fix t, for $v \in V,$
	\begin{align*}
		g_k(t)v=\int_{B_1}f_k(t,x)v(x)dx + \int_{B_1^*}y^aF_k(t,X)\cdot \nabla v(X)dX.
	\end{align*}
Using triangle inequality, Cauchy-Schwarz inequality and trace theorem (in first term), we have $g_k(t) \in V^*$ with
\begin{align*}
||{g_k}(t)||_{V^*} \leq \bigg(\int_{B_1}|f_k(t,x)|^2dx\bigg)^{1/2}+\bigg(\int_{B_1^*}y^a|F_k(t,X)|^2dX\bigg)^{1/2}.
\end{align*} 
Use $f_k \in L^2(Q_1)$ and  $F_k \in L^2_a(Q_1^*)$ to get $g_k \in L^2(-1,1;V^*).$ Also, Define   $\tilde{g_k}:[-1,1]\rightarrow V^*$ as following: Fix t, for $v \in V,$
\begin{align*}
	\tilde{g_k}(t)v=\int_{B_1}(f_k)_t (t,x)v(x)dx +\int_{B_1^*}y^a(F_k)_t(t,X)v(X)dX,
\end{align*}
then $g_k'=\tilde{g_k} \in L^2(-1,1;V^*).$  For $u,v \in V,$ define
\begin{align*}
	a(u,v)=\int_{B^*_{1}}y^a\nabla u \cdot \nabla v  dX.
\end{align*}
Note that by Poincare inequality, for all $u \in V$ we have
\begin{align*}
	a(u,u) \geq C||u||^2.
\end{align*}
%For $u_0 \in V,$ define a linear functional $T$ on $V$ as $T(v)=a(u_0,v)$. Using boundedness of $T$, we will extend $T$ on $L^2$. Now apply Riesz representation theorem to get $k \in L^2$ such that $a(u_0,v)=(k,v)$.
Thus we have verified conditions of Theorem 5.1 of \cite{DL} for $u_0=0$ and $\chi=0$. Hence we get $u_k \in L^2(-1,1;V)$ and  $(u_k)_t \in L^2(-1,1;V)$ such that for almost all $t$ and for all $v \in V$ we have 
\begin{align}\label{get}
	\int_{B^*_1}(u_k)_t v dX+\int_{B^*_{1}}y^a\nabla u_k \cdot \nabla v  dX=\int_{B_1}f_k(t,x)v(x)dx + \int_{B_1^*}y^aF_k(t,X)\cdot \nabla v(X)dX.
\end{align}
Put $v=u_k(t)$ in (\ref{get}) and integrate with respect to $t$ to get
\begin{align*}
	\int_{-1}^{1}	\int_{B^*_1}(u_k)_tu_k dXdt+\int_{-1}^{1}\int_{B^*_{1}}y^a\nabla u_k \cdot \nabla u_k  dXdt=\int_{-1}^{1}\int_{B_1}f_ku_kdxdt + \int_{-1}^{1}\int_{B_1^*}y^aF_k\cdot \nabla u_kdXdt.
\end{align*}
Since $t \rightarrow ||u_k(t)||_{V}$ is a absolutely continuous function, we get
\begin{align*}
	\frac{1}{2}\int_{B^*_1}u_k^2(1) dXdt+\int_{-1}^{1}\int_{B^*_{1}}y^a\nabla u_k \cdot \nabla u_k  dXdt=\int_{-1}^{1}\int_{B_1}f_ku_kdxdt + \int_{-1}^{1}\int_{B_1^*}y^aF_k\cdot \nabla u_kdXdt.
\end{align*}
On applying Holder's inequality, trace theorem and AM-GM inequality in first term of RHS and Holder's inequality and AM-GM inequality in second term we get 
\begin{align}\label{bound}
	\int_{-1}^{1}\int_{B^*_{1}}y^a\nabla u_k \cdot \nabla u_k  dXdt \leq 2C_{Tr}\int_{-1}^{1}\int_{B_{1}}f_k^2dxdt+2\int_{-1}^{1}\int_{B^*_{1}}y^a|F_k|^2dXdt.
\end{align}
Now write (\ref{get}) as
\begin{align}
	\int_{B^*_1}(u_k)_t v dX=\int_{B_1}f_k(t,x)v(x)dx + \int_{B_1^*}y^aF_k(t,X)\cdot \nabla v(X)dX-\int_{B^*_{1}}y^a\nabla u_k \cdot \nabla v  dX.
\end{align}
After applying Holder's inequality and trace theorem in first term of RHS and Holder's inequality in second term and third term of RHS we get, for almost all $t$
\begin{align*}
	||(u_k)_t(t)||_{V^*} \leq \bigg(\int_{B_1}|f_k(t,x)|^2dx\bigg)^{1/2}+\bigg(\int_{B_1^*}y^a|F_k(t,X)|^2dX\bigg)^{1/2}+\bigg(\int_{B_1^*}y^a|\nabla u_k|^2dX\bigg)^{1/2}.
\end{align*}
After Squaring both side and applying AM-GM inequality we get
\begin{align*}
	\int_{-1}^{1}||(u_k)_t(t)||_{V^*}^2dt \leq 4\int_{-1}^{1}\int_{B_1}|f_k(t,x)|^2dxdt+4\int_{-1}^{1}\int_{B_1^*}y^a|F_k(t,X)|^2dXdt+4\int_{-1}^{1}\int_{B_1^*}y^a|\nabla u_k|^2dXdt.
\end{align*}
Using (\ref{bound}) and boundedness of $||f_k||_{L^2}$ and $||F_k||_{L_a^2}$, we get $u_k$ and $(u_k)_t$ is bounded in $L^2(-1,1;V)$ and $L^2(-1,1;V^*)$ respectively. Hence we will get a weak convergent subsequence $u_k$ such that $u_k \rightharpoonup u$ in $L^2(-1,1;V)$ and $(u_k)_t \rightharpoonup u_t$ in $L^2(-1,1;V^*).$ Hence, for all $\phi \in H^1_0(-1,1;V)$ we have
\begin{align}\label{fin}
-\int_{-1}^{1}\int_{B^*_1}u \phi_t dXdt+\int_{-1}^{1}\int_{B^*_{1}}y^a\nabla u \cdot \nabla \phi  dXdt=\int_{-1}^{1}\int_{B_1}f(t,x)\phi dxdt + \int_{-1}^{1}\int_{B_1^*}y^aF_k(t,X)\cdot \nabla \phi(t,X)dXdt.
\end{align}
Also $u \in L^2(-1,1;V)$ and $u_t  \in L^2(-1,1;V^*)$ implies $u \in C(-1,1;V).$ Since $u_k(0)=0$ for all $k$, $u(0)=0.$ Note that (\ref{fin}) is equivalent to following: for all $-1 < t_1 <t_2 <1$ and $\phi \in H^1(-1,1;V)$  we have 
\begin{align}\label{fin2}
	\int_{B^*_1}u \phi(t_1) dX-\int_{B^*_1}u \phi(t_2) dXdt-\int_{t_1}^{t_2}\int_{B^*_1}u \phi_t dXdt+\int_{t_1}^{t_2}\int_{B^*_{1}}y^a\nabla u \cdot \nabla \phi  dXdt\\=\int_{t_1}^{t_2}\int_{B_1}f(t,x)\phi dxdt + \int_{t_1}^{t_2}\int_{B_1^*}y^aF(t,X)\cdot \nabla \phi(t,X)dXdt.
\end{align} 

Now we will prove uniqueness. Assume there are two solutions $u_1$ and $u_2$ in $C(-1,1;V) \cap L^2(-1,1;V)$. By standard argument Steklov average of $u^i$, $u^i_h$ belongs to $H^1(-1,1;V)$ and satisfy
\begin{align*}
	\int_{t_1}^{t_2}\int_{B^*_1}(u^i_h)_t \phi dXdt+\int_{t_1}^{t_2}\int_{B^*_{1}}y^a\nabla u^i_h \cdot \nabla \phi  dXdt=\int_{t_1}^{t_2}\int_{B_1}f_h(t,x)\phi dxdt + \int_{t_1}^{t_2}\int_{B_1^*}y^aF_h(t,X)\cdot \nabla \phi(t,X)dXdt.
\end{align*}
On Putting $\phi =u^2_h-u^1_h$ and subtracting two equations we get
\begin{align*}
	\int_{t_1}^{t_2}\int_{B^*_1}(u^2_h-u^1_h)_t (u^2_h-u^1_h) dXdt+\int_{t_1}^{t_2}\int_{B^*_{1}}y^a\nabla (u^2_h-u^1_h) \cdot \nabla (u^2_h-u^1_h) dXdt=0.
\end{align*}
On passing limit as $h \rightarrow 0$ we get
\begin{align*}
	\int_{B^*_1}(u^2-u^1)^2(t_2)dX-\int_{B^*_1}(u^2-u^1)^2(t_1)dX\leq 0
\end{align*} 
Take $t_1 \rightarrow -1$ and conclude $u^2=u^1.$ This completes the proof of uniqueness.
\end{proof}	
\begin{rmrk}\label{intexsit} 
	For every $F \in L^2(Q_1)$, there exists a unique weak solution $U \in C(-1,1;V (B_1))$ to
	\begin{align*}
		\begin{cases}
		 \partial_t u - \operatorname{div}( B(x) \nabla u)= \operatorname{div}( F)&\hbox{in}~Q_1 \\
		u=0&\hbox{on}~\partial_pQ_1.
	\end{cases}
	\end{align*}
This can be obtained by taking $V=H^1_{0}(B_1)$ and measure $dxdt$ instead of $y^adXdt.$ 
\end{rmrk}


\begin{thebibliography}{99}
	
	\bibitem{Au}
	A. Audrito, \emph{On the existence and H\"older regularity of solutions to some nonlinear Cauchy-Neumann problems}, arXiv:2107.03308.		
	
	\bibitem{Ak} K. Adimurthi \& A. Banerjee, \emph{Borderline regularity for fully nonlinear equations in dini domains}, arXiv: 1806.07652v2, to appear in Advances in Calculus of Variations.
	
	\bibitem{ACM}
	I. Athanasopoulos, L. Caffarelli \& E. Milakis, \emph{On the regularity of the non-dynamic parabolic fractional obstacle problem}, J. Differ. Equ. 265(6) (2018), 2614-2647.	
	
	\bibitem{AT}
	A. Audrito \& S. Terracini, \emph{On the nodal set of solutions to a class of nonlocal parabolic reaction-diffusion equations},  arXiv:1807.10135
	
	\bibitem{Ba}
	A. Balakrishnan, \emph{Fractional powers of closed operators and the semigroups generated by them}, Pacific J. Math. 10 (1960) 419-437.
	
	
	\bibitem{BDGP1}
	A. Banerjee, D. Danielli, N. Garofalo \& A. Petrosyan, \emph{The regular free boundary in the thin obstacle problem for degenerate parabolic equations},
	Algebra i Analiz 32 (2020), no. 3, 84-126.
	
	\bibitem{BDGP2}
	A. Banerjee, D. Danielli, N. Garofalo \& A. Petrosyan, \emph{The structure of the singular set in the thin obstacle problem for degenerate parabolic equations},
	Calc. Var. Partial Differential Equations 60 (2021), no. 3, Paper No. 91, 52 pp.
	
	
	
	\bibitem{BG}
	A. Banerjee \& N. Garofalo, \emph{Monotonicity of generalized frequencies and the strong unique continuation property for fractional parabolic equations}, Adv. Math. 336 (2018), 149-241.
	
	\bibitem{BGMN}
	A. Banerjee, N. Garofalo, I. Munive \& D. Nhieu, \emph{The Harnack inequality for a class of nonlocal parabolic equations}, 
	Commun. Contemp. Math. 23 (2021), no. 6, Paper No. 2050050, 23 pp.
	
	\bibitem{BM}
	A. Banerjee \& I. Munive, \emph{Gradient continuity estimates for the normalized p-Poisson equation}, 
	Commun. Contemp. Math. 22 (2020), no. 8, 1950069, 24 pp.
	
	\bibitem{BDS}
	A. Biswas, M.  De Leon-Contreras \& P. Stinga
	\emph{Harnack inequalities and Hölder estimates for master equations}, 
	SIAM J. Math. Anal. 53 (2021), no. 2, 2319-2348.
	
	\bibitem{BS} A. Biswas \& P. R. Stinga, \emph{ Regularity Estimates for Nonlocal Space-Time Master Equations in Bounded Domains}, J. Evol. Equ. 21 (2021), 503--565.
	
	\bibitem{Ca} L. Caffarelli,  \emph{Interior estimates for fully nonlinear equations}, Ann. of Math. 130 (1989), 189--213.
	
	%\bibitem{Ca2} Caffarelli, L.A.: Interior $W^{2,p}$ estimates for solutions of the Monge-Amp\`{e}re equation, Ann. of Math. 131, 135--150 (1990)
	
	%\bibitem{Ca3} Caffarelli, L.A.: A localization  property  of  viscosity solutions  to the Monge-Amp\`{e}re equation and their strict convexity, Ann. of Math. 131, 129--134 (1990)
	
	
	\bibitem{cc} L. Caffarelli \& X. Cabre, \emph{Fully nonlinear elliptic equations}. AMS 43, Providence, RI,
	pp. 21--28 (1995).
	\bibitem{ck} L. Caffarelli \& C. E. Kenig, \emph{Gradient estimates for variable coefficient parabolic equations and singular perturbation problems}. Amer. J. Math. 120 (1998), no. 2, 391–439.
	
	\bibitem{CS}
	L. Caffarelli \& L. Silvestre, \emph{An extension problem related to the fractional Laplacian}, Comm. Partial Differential Equations 32~(2007), no. 7-9, 1245-1260.
	
	\bibitem{CSstein}
	L. Caffarelli \& L. Silvestre, \emph{H\"older regularity for generalized master equations with rough kernels}, Advances in analysis: the legacy of Elias M. Stein, 63-83, Princeton Math. Ser., 50, Princeton Univ. Press, Princeton, NJ, 2014. 
	
	\bibitem{SC} L. Caffarelli \&  P. R. Stinga, 
	\emph{Fractional elliptic equations, Caccioppoli estimates and regularity},
	Ann. Inst. H. Poincar\'e Anal. Non Lin\'eaire
	33 (2016), 767--807.
	
	\bibitem{CSe}F. Chiarenza \& R. Serapioni, A remark on a Harnack inequality for degenerate parabolic equations., Rend.
	Sem. Mat. Univ. Padova 73 (1985), 179-190.
	\bibitem{DKM} P. Daskalopoulos,  T. Kuusi \& G. Mingione, \emph{ Borderline estimates for fully nonlinear elliptic equations}, Comm. Partial Differential Equations 39(2014), 574--590.
	
	\bibitem{DP}
	H.  Dong \& T.  Phan, \emph{ Regularity for parabolic equations with singular or degenerate coefficients},  Calc. Var. Partial Differential Equations 60 (2021), no. 1, Paper No. 44, 39 pp.  
	
	
	\bibitem{DL}
	G. Duvaut \& J.-L. Lions, \emph{Les in\'equations en m\'ecanique et en physique}, (French) Travaux et Recherches Math\'ematiques, no.~21. Dunod, Paris, 1972. xx+387 pp.	
	
	
	
	
	
	
	
	
	\bibitem{MR2823872}
	F. Duzaar \& G. Mingione, \emph{Gradient estimates via non-linear
		potentials}, Amer. J. Math. 133 (2011), no.~4, 1093--1149.
	\bibitem{FKS}E. B. Fabes, C. E. Kenig \& R. P. Serapioni, \emph{The local regularity of solutions of degenerate elliptic equations}, Communications in Partial Differential Equations 7 (1982) no.~1, 77-116.
	
	\bibitem{HL} G.H. Hardy, J.E. Littlewood \& G. P\'{o}lya,  Inequalities. Cambridge: Cambridge University Press. xii + 324 (1988).
	
	
	\bibitem{MR2900466}
	T. Kuusi \& G. Mingione, \emph{Universal potential estimates}, J.
	Funct. Anal. 262 (2012), no.~10, 4205--4269. 
	
	\bibitem{MR3004772}
	T. Kuusi \& G. Mingione, \emph{Linear potentials in nonlinear potential theory}, Arch. Ration.
	Mech. Anal. 207 (2013), no.~1, 215--246. 
	
	\bibitem{MR3174278}
	T. Kuusi \& G. Mingione, \emph{Guide to nonlinear potential estimates}, Bull. Math. Sci.
	4 (2014), no.~1, 1--82. 
	
	\bibitem{MR3247381}
	T. Kuusi \& G. Mingione, \emph{A nonlinear {S}tein theorem}, Calc. Var. Partial Differential
	Equations 51 (2014), no.~1-2, 45--86. 	
	
	
	
	
	
	
	
	
	
	\bibitem{LSU} O.~A. Lady\v{z}enskaja, V.~A. Solonnikov \& N.~N. Ural'ceva,  \emph{Linear and Quasilinear Equations of Parabolic Type},
	translations of Mathematical Monographs 23,American Mathematical Society,
	Providence, RI, 1968.
	
	
	\bibitem{LLR}
	R. Lai, Y. Lin \& A. Ruland, \emph{The Calderón problem for a space-time fractional parabolic equation},  SIAM J. Math. Anal. 52 (2020), no. 3, 2655-2688. 	
	
	\bibitem{LN}
	M. Litsgard \& K. Nystrom, \emph{Fractional powers of parabolic operators with time-dependent measurable coefficients}, arXiv:2104.07313.
	
	\bibitem{L} G.~G. Lorentz: \emph{Approximation of functions}, Holt, Rinehart and Winston, New York-Chicago, Ill.-Toronto, Ont., 1966. 
	
	
	
	
	
	
	
	
	
	
	\bibitem{KMS}
	M. Kenkre, E. W. Montroll \& M. F. Shlesinger, \emph{Generalized master equations for continuous-time random walks}, Journal of Statistical Physics, 9~(1973), (1) 45-50.
	
	\bibitem{MK}
	R. Metzler \& J. Klafter, \emph{The random walk's guide to anomalous diffusion: a fractional dynamics approach}, Phys. Rep. 339~(2000), no. 1, 77 pp.
	
	\bibitem{MW}
	E. W. Montroll \& G. H. Weiss, \emph{Random walks on lattices. II}, J. Mathematical Phys. 6~(1965), 167-181.		
	
	
	\bibitem{Nek} A. Nekvinda,  \emph{Characterization of traces of the weighted Sobolev space $W^{1,p}(\Omega, d^\epsilon_M)$ on $M$}, Czechoslovak Math. J. 43, 695--711 (1993).
	
	
	\bibitem{NS}
	K. Nystr\"om \& O. Sande, \emph{Extension properties and boundary estimates for a fractional heat operator}, Nonlinear Analysis, 140~(2016), 29-37.		
	
	\bibitem{ON} R. O'Neil,  \emph{Integral transforms and tensor products on Orlicz spaces and $L(p,q)$ spaces}, J.Anal. Math. 21, 4--276 (1968).
	
	\bibitem{R1}
	M. Riesz, \emph{Int\'egrales de Riemann-Liouville et potentiels}, Acta Sci. Math. Szeged, 9~(1938), 1-42.
	
	
	\bibitem{R2}
	M. Riesz, \emph{L'int\'egrale de Riemann-Liouville et le probl\`eme de Cauchy}, (French) Acta Math. 81~(1949), 1-223.
	
	
	\bibitem{SKM}
	S. G. Samko, A. A. Kilbas \& O. I. Marichev, \emph{Fractional integrals and derivatives. Theory and applications}, Edited and with a foreword by S. M. Nikol'skii. Translated from the 1987 Russian original. Revised by the authors. Gordon and Breach Science Publishers, Yverdon, 1993. xxxvi+976 pp.
	
	\bibitem{MR607898}
	E. M. Stein,
	\emph{Editor's note: the differentiability of functions in Rn}, Ann. of Math. (2) 113 (1981), no. 2, 383–385.
	%\bibitem{ST}
	%P. R. Stinga \& J. L. Torrea, \emph{Regularity theory and extension problem for fractional nonlocal parabolic equations and the master equation}, SIAM J. Math. Anal., \textbf{49}~(2017), no. 5, 3893-3924. 		
	
	\bibitem{ST} P.~R. Stinga \&  J.~L. Torrea, 
	\emph{Regularity theory and extension problem for fractional nonlocal parabolic equations and the master equation},
	SIAM J. Math. Anal. 49 (2017), 3893--3924.
	
	
	
\end{thebibliography}
\end{document}